\newenvironment{customthm}[1]
  {\innercustomthm}
  {\endinnercustomthm}
\newenvironment{customlem}[1]
  {\innercustomlem}
  {\endinnercustomlem}
\newenvironment{custompro}[1]
  {\innercustompro}
  {\endinnercustompro}
\newtheorem*{thm*}{Theorem}
\newtheorem{thm}{Theorem}
\newtheorem{lem}[thm]{Lemma}
\newtheorem{pro}[thm]{Proposition}
\newtheorem{obs}[thm]{Observation}
\newtheorem{conj}[thm]{Conjecture}
\newcommand{\N}{\mathbb{N}}
\newcommand{\E}{\mathbb{E}}
\begin{document}

\title{On Equitable List Arboricity of Graphs}

\author{Hemanshu Kaul\footnote{Department of Applied Mathematics, Illinois Institute of Technology, Chicago, IL 60616. E-mail: {\tt kaul@iit.edu}} \\
Jeffrey A. Mudrock\footnote{Department of Mathematics, College of Lake County, Grayslake, IL 60030. E-mail: {\tt jmudrock@clcillinois.edu}} \\
Michael J. Pelsmajer\footnote{Department of Applied Mathematics, Illinois Institute of Technology, Chicago, IL 60616. E-mail: {\tt pelsmajer@iit.edu}} }

\maketitle

\begin{abstract}
	Equitable list arboricity, introduced by Zhang in 2016, generalizes the notion of equitable list coloring by requiring the subgraph induced by each color class to be acyclic (instead of edgeless) in addition to the usual upper bound on the size of each color class.  Graph $G$ is \emph{equitably $k$-list arborable} if an equitable, arborable list coloring of $G$ exists for every list assignment for $G$ that associates with each vertex in $G$ a list of $k$ available colors. Zhang conjectured that any graph $G$ is equitably $k$-list arborable for each $k$ satisfying $k \geq \lceil (1+\Delta(G))/2 \rceil$. We verify this conjecture for powers of cycles by applying a new lemma which is a general tool for extending partial equitable, arborable list colorings. We also propose a stronger version of Zhang's Conjecture for certain connected graphs: any connected graph $G$ is equitably $k$-list arborable for each $k$ satisfying $k \geq \lceil \Delta(G)/2 \rceil$ provided $G$ is neither a cycle nor a complete graph of odd order.  We verify this stronger version of Zhang's Conjecture for powers of paths, 2-degenerate graphs, and certain other graphs.  We also show that if $G$ is equitably $k$-list arborable it does not necessarily follow that $G$ is equitably $(k+1)$-list arborable which addresses a question of Drgas-Burchardt, Furma{\' n}czyk, and Sidorowicz (2018).
	

\medskip

\noindent {\bf Keywords.}  graph coloring, list coloring, equitable coloring, arboricity.

\noindent \textbf{Mathematics Subject Classification.} 05C15

\end{abstract}

\section{Introduction}\label{intro}

In this paper all graphs are nonempty, finite, simple graphs unless otherwise noted.  Generally speaking we follow West~\cite{W01} for terminology and notation.  The set of natural numbers is $\N = \{1,2,3, \ldots \}$.  For $m \in \N$, we write $[m]$ for the set $\{1, \ldots, m \}$.  If $G$ is a graph and $S, U \subseteq V(G)$, we use $G[S]$ for the subgraph of $G$ induced by $S$, and we use $E_G(S, U)$ for the subset of $E(G)$ with at least one endpoint in $S$ and at least one endpoint in $U$.  If an edge in $E(G)$ connects the vertices $u$ and $v$, the edge can be represented by $uv$ or $vu$.  We use $\alpha(G)$ and $\omega(G)$ for the size of the largest independent set and the size of the largest clique in $G$ respectively.  For $v \in V(G)$, we write $d_G(v)$ for the degree of vertex $v$ in the graph $G$, and we use $\Delta(G)$ for the maximum degree of a vertex in $G$.  We say $G$ is \emph{k-degenerate} when every subgraph of $G$ has a vertex of degree at most $k$.  We write $N_G(v)$ for the neighborhood of vertex $v$ in the graph $G$.   Also, $G^k$ denotes the $k^{th}$ power of graph $G$ (i.e., $G^k$ has the same vertex set as $G$ and edges between any two vertices within distance $k$ in $G$).  When $G_1$ and $G_2$ are vertex disjoint graphs we use $G_1+G_2$ to denote the disjoint union of $G_1$ and $G_2$.   If $P$ is a path, $V(P)= \{v_1, \ldots, v_n \}$, and two vertices are adjacent in $P$ if and only if they appear consecutively in the ordering: $v_1, \ldots, v_n$, then we say the vertices are written \emph{in order} when we write $v_1, \ldots, v_n$.  If $C$ is a cycle, $V(C)= \{v_1, \ldots, v_n \}$, and $E(C) = \{\{v_1,v_2 \}, \{v_2, v_3 \}, \ldots, \{v_{n-1}, v_n \}, \{v_n, v_1 \} \}$, then we say the vertices are written in \emph{cyclic order} when we write $v_1, \ldots, v_n$.

In classical vertex coloring, a \emph{proper $k$-coloring} of graph $G$ is a function $f: V(G) \rightarrow A$ where $A$ is a set of colors of size $k$ and $f(u) \neq f(v)$ whenever $uv \in E(G)$.  For each $c \in A$ we say that $f^{-1}(c)$ is the \emph{color class} of $f$ corresponding to $c$.  Clearly, $f^{-1}(c)$ is an independent set in $G$.  In this paper we study a variant of classical vertex coloring called equitable list arboricity which was introduced by Zhang~\cite{Z16}.  This notion combines the notions of equitable coloring, list coloring, and vertex arboricity. So, we begin by briefly reviewing these notions.

\subsection{Equitable Coloring and Equitable Choosability}

Equitable coloring is a variation on classical vertex coloring that was formally introduced by Meyer in the 1973~\cite{M73} (though the study of equitable coloring began in 1964 with a conjecture of Erd\H{o}s~\cite{E64}).  An \emph{equitable $k$-coloring} of a graph $G$ is a proper $k$-coloring of $G$ such that the sizes of the color classes differ by at most one (where a proper $k$-coloring has exactly $k$ color classes).  In an equitable $k$-coloring, it is easy to verify that the color classes associated with the coloring are each of size $\lceil |V(G)|/k \rceil$ or $\lfloor |V(G)|/k \rfloor$.  We say that a graph $G$ is \emph{equitably $k$-colorable} if there exists an equitable $k$-coloring of $G$.

Unlike classical vertex coloring, increasing the number of colors can make equitable coloring more difficult.  Indeed for any $m \in \N$, $K_{2m+1, 2m+1}$ is equitably $2m$-colorable, but it is not equitably $(2m+1)$-colorable.  In 1970, Hajn\'{a}l and Szemer\'{e}di~\cite{HS70} proved the 1964 conjecture of Erd\H{o}s: every graph $G$ has an equitable $k$-coloring when $k \geq \Delta(G)+1$.  In 1994, Chen, Lih, and Wu~\cite{CL94} conjectured that the result of  Hajn\'{a}l and Szemer\'{e}di can be improved by 1 for most connected graphs by characterizing the extremal graphs as: $K_m$, $C_{2m+1}$, and $K_{2m+1,2m+1}$. Their conjecture is still open and is known as the Equitable $\Delta$-Coloring Conjecture. It has received considerable attention in the literature (see e.g.,~\cite{CL94, DW18, DZ18, L98, LW96, YZ97}).

List coloring is yet another variation on classical vertex coloring, and it was introduced independently by Vizing~\cite{V76} and Erd\H{o}s, Rubin, and Taylor~\cite{ET79} in the 1970s.  For list coloring, we associate with graph $G$ a \emph{list assignment}, $L$, that assigns to each vertex $v \in V(G)$ a list, $L(v)$, of available colors. Graph $G$ is said to be \emph{$L$-colorable} if there exists a proper coloring $f$ of $G$ such that $f(v) \in L(v)$ for each $v \in V(G)$ (we refer to $f$ as a \emph{proper $L$-coloring} of $G$).  A list assignment $L$ is called a \emph{k-assignment} for $G$ if $|L(v)|=k$ for each $v \in V(G)$.  We say $G$ is \emph{k-choosable} if $G$ is $L$-colorable whenever $L$ is a $k$-assignment for $G$.  Notice that unlike equitable $k$-colorability, when a graph is $k$-choosable it immediately follows that it is also $(k+1)$-choosable.

In 2003, the third author along with Kostochka and West introduced a notion combining equitable coloring and list coloring known as equitable choosability~\cite{KP03}.  Suppose that $L$ is a $k$-assignment for graph $G$.  An \emph{equitable $L$-coloring} of $G$ is a proper $L$-coloring $f$ of $G$ such that $f$ uses no color more than $\lceil |V(G)|/k \rceil$ times.  When an equitable $L$-coloring of $G$ exists, we say that $G$ is \emph{equitably $L$-colorable}.  Graph $G$ is \emph{equitably $k$-choosable} if $G$ is equitably $L$-colorable whenever $L$ is a $k$-assignment for $G$.  It is important to note that, similar to equitable coloring, making the lists larger may make equitable list coloring more difficult.  Indeed, $K_{1,9}$ is equitably 4-choosable, but it is not equitably 5-choosable.  Also, equitable $k$-choosability does not imply equitable $k$-colorability unless $k=2$ (see~\cite{MC19}).

It is conjectured in~\cite{KP03} that the Hajn\'{a}l-Szemer\'{e}di Theorem and the Equitable $\Delta$-Coloring Conjecture hold in the context of equitable choosability. Both of these conjectures have received quite a bit of attention in the literature (see e.g.,~\cite{KM18, KK13, LB09, M18, ZW11, ZB08, ZB10, ZB15}). We formally state this second conjecture.

\begin{conj}[\cite{KP03}] \label{conj: KPW2}
	A connected graph $G$ is equitably $k$-choosable for each $k \geq \Delta(G)$ if it is different from $K_m$, $C_{2m+1}$, and $K_{2m+1,2m+1}$.
\end{conj}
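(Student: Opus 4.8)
The plan is to treat Conjecture~\ref{conj: KPW2} as the equitable-choosability analogue of the (still open) Equitable $\Delta$-Coloring Conjecture, so a complete proof is out of reach with current techniques; instead I would attack it the way the literature does --- and the way this paper handles the arboricity version --- namely by proving it for larger and larger classes of graphs via a single reusable \emph{extension lemma}. Such a lemma would say: if $S \subseteq V(G)$ and $G' = G - S$ admits an equitable $L|_{G'}$-coloring using each color at most $\lceil (|V(G)|-|S|)/k\rceil$ times, then, provided $S$ is small and sufficiently independent in $G$, one can modify that coloring and extend it to an equitable $L$-coloring of $G$. Given such a lemma, one inducts on $|V(G)|$: find a suitably structured small set $S$ in $G$, color $G-S$ by induction from the restricted lists, and invoke the lemma. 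A preliminary reduction handles large $k$: when $k \geq |V(G)|$ the condition $\lceil |V(G)|/k\rceil = 1$ just asks for a proper $L$-coloring with all colors distinct, which exists by Hall's theorem since $|L(v)| \geq |V(G)|$, so only the range $\Delta(G) \leq k \leq |V(G)|-1$ is substantive; and since larger $k$ need not be easier (witness $K_{1,9}$), each $k$ must be carried through the induction separately, which a $k$-parametrized extension lemma does uniformly.

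For graphs with no extra structure, the model to imitate is the Kierstead--Kostochka refinement of the proof of the Hajn\'{a}l--Szemer\'{e}di Theorem: keep a ``nearly equitable'' $L$-coloring in which all classes have size $\lceil n/k\rceil$ or $\lfloor n/k\rfloor$ except one undersized class and one uncolored vertex, and repair it by augmenting moves through the auxiliary graph on color classes. The main obstacle --- and the reason I do not expect this route to yield the full conjecture --- is that in the list setting colors are not interchangeable: recoloring a vertex $v$ from color $a$ to color $b$ is legal only when $b \in L(v)$, so the color symmetry that powers the discharging/augmenting analysis in the ordinary case is lost, and augmenting structures can be blocked purely by list incompatibility. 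This is precisely why the known results require additional hypotheses (bounded degeneracy, powers of paths and cycles, and so on), where one has enough freedom in choosing $S$ and in ordering the recoloring steps to keep the needed moves available.

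Concretely, then, for the classes this paper targets I would: (i) set up the extension lemma for edgeless color classes (the analogue of the arboricity lemma advertised in the abstract); (ii) use the structure of the class in question --- a convenient linear or cyclic vertex ordering, a small separator, or a degeneracy ordering --- to isolate a legal set $S$; (iii) apply the inductive hypothesis to $G - S$ with the restricted lists; and (iv) verify the list-compatibility hypotheses of the extension lemma for that particular $S$. Step (iv) is where the real work sits: it is easy to lose control of how many times a given color has been used once $S$ is reinserted, so the lemma must be stated with enough slack (``one class undersized, one vertex free'') to absorb the adjustment, and the choice of $S$ in step (ii) must be made with exactly that bookkeeping in mind.
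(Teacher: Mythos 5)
The statement you were asked about is Conjecture~\ref{conj: KPW2}, which is an \emph{open conjecture} due to Kostochka, Pelsmajer, and West~\cite{KP03}; the paper does not prove it and does not claim to --- it only states it as motivation for Conjecture~\ref{conj: stronger}. So there is no proof in the paper to compare against, and your proposal, by its own admission, is not a proof either: it is a research program. You correctly diagnose the situation (the conjecture is the choosability analogue of the still-open Equitable $\Delta$-Coloring Conjecture, colors are not interchangeable in the list setting, and known progress proceeds class-by-class via extension lemmas in the spirit of Lemma~3.1 of~\cite{KP03}, of which Lemma~\ref{lem: pelsmajerlike} and Lemma~\ref{lem: general} in this paper are the arboricity analogues). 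That diagnosis matches how the surrounding literature and this paper actually operate.

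The one substantive caution: none of the steps you outline closes the conjecture, and step~(iv) --- verifying list-compatibility when reinserting $S$ --- is exactly where every known approach stalls for general graphs. If you were asked to \emph{prove} the statement, the correct answer is that you cannot with current techniques, and you should say so explicitly rather than present the sketch as if it were on a path to a complete argument. As a survey of how partial results are obtained, your outline is accurate; as a proof of Conjecture~\ref{conj: KPW2}, it establishes nothing beyond the trivial range $k \geq |V(G)|$.
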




\subsection{List Vertex Arboricity and Equitable Vertex Arboricity}

The study of vertex arboricity (also called point arboricity) began in the 1960s~\cite{B64, CK69}.   Graph $G$ is \emph{vertex $k$-arborable} if there is a $k$-coloring (not necessarily proper) $f$ of $G$ such that for each color class $f^{-1}(c)$, $G[f^{-1}(c)]$ is acyclic (we call $f$ an \emph{arborable, vertex $k$-coloring of $G$})\footnote{Such a coloring is also referred to as a \emph{tree-$k$-coloring} in the literature.}.  In 2000, Borodin, Kostochka, and Toft~\cite{BK00} introduced a list version of vertex arboricity. If $L$ is a list assignment for $G$, we say that $G$ is \emph{$L$-arborable} if there is an $L$-coloring (not necessarily proper) $f$ of $G$ such that for each color class $f^{-1}(c)$, $G[f^{-1}(c)]$ is acyclic (we call $f$ an \emph{arborable $L$-coloring of $G$}). Graph $G$ is \emph{$k$-list arborable} if for any $k$-assignment $L$ for $G$, $G$ is $L$-arborable.\footnote{We use terms like \emph{$L$-arborable} and \emph{$k$-list arborable} rather than terms like \emph{vertex $L$-arborable} and \emph{vertex $k$-list arborable} since it is understood in this paper that all list assignments associate lists with vertices of a graph.}  In the study of list vertex arboricity, the following analogue of Brooks' Theorem is a well-known result.

\begin{thm} [\cite{BK00, BD95}] \label{thm: upperarbor}
Suppose $G$ is a connected non-complete graph with $\Delta(G) \geq 3$.  Then $G$ is $k$-list arborable whenever $k \geq \Delta(G)/2$.
\end{thm}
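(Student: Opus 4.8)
The plan is to reduce to $k=\lceil\Delta(G)/2\rceil$ and then dispatch all but one case by a greedy coloring along a well-chosen ordering, leaving the Brooks-type extremal case for last. Write $\Delta=\Delta(G)$. First, $k$-list arborability is monotone in $k$: from a $k$-assignment with $k\ge\lceil\Delta/2\rceil$, delete colors from each list to get a $\lceil\Delta/2\rceil$-assignment, and an arborable coloring of the smaller assignment is one of the original. So assume $k=\lceil\Delta/2\rceil$. The basic tool is this \emph{greedy principle}: order $V(G)$ as $v_1,\dots,v_n$ and color in order, assigning each $v_i$ a color of $L(v_i)$ in which $v_i$ has at most one earlier neighbor. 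Such a color exists whenever $v_i$ has at most $2k-1$ earlier neighbors (the $k$ colors of $L(v_i)$ cannot then all be met twice), and any coloring so produced is arborable, since the vertex of a monochromatic cycle appearing last in the order would have two earlier neighbors of its own color. In particular $G$ is $k$-list arborable whenever $G$ is $(2k-1)$-degenerate.

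Two cases follow at once. If $\Delta$ is odd then $2k=\Delta+1$, so every vertex has at most $2k-1$ neighbors and any ordering works. If $\Delta$ is even but $G$ is not $\Delta$-regular, fix a vertex $r_0$ of minimum degree and order $V(G)$ by nonincreasing distance from $r_0$ (using connectivity of $G$); then every vertex other than $r_0$ has a strictly later neighbor, hence at most $\Delta-1=2k-1$ earlier neighbors, and $r_0$ has at most $\Delta-1$ neighbors altogether, so the greedy principle applies throughout.

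The remaining case --- $\Delta=2k\ge 4$ with $G$ connected, $\Delta$-regular and non-complete --- is the extremal situation of Brooks' theorem and, I expect, the main obstacle; I would treat it by a minimality argument paralleling the proofs of Brooks' theorem and its list analogue. Among all $L$-colorings of $G$, take one minimizing the number $\sum_c e(G[f^{-1}(c)])$ of monochromatic edges and, among those, minimizing the total cyclomatic number $\sum_c\bigl(e(G[f^{-1}(c)])-|f^{-1}(c)|+\kappa(G[f^{-1}(c)])\bigr)$, where $\kappa$ counts components; the goal is to show the latter is $0$, i.e.\ every color class induces a forest. If not, some class $f^{-1}(c_0)$ contains a cycle, hence a vertex $v$ on a monochromatic cycle, and the routine analysis above forces $v$ to be $\Delta$-regular with exactly two neighbors in each of its $k$ colors. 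Moving $v$ from $c_0$ to another color $c_1$ keeps the first quantity minimal, and if the two $c_1$-neighbors of $v$ lie in distinct components of $G[f^{-1}(c_1)]$ it strictly lowers the second --- a contradiction --- so for every color the two relevant neighbors of $v$ must be joined by a monochromatic path, and the same rigidity holds throughout the $2$-core of $v$'s component. The crux is to turn this rigidity into a contradiction using that $G$ is connected and non-complete: in the classical proof one forces a coincidence of colors among two neighbors of a chosen vertex, but here the list constraint rules out prescribing such a coincidence, and the extremal pattern survives single-vertex moves, so one must instead coordinate a two-vertex recoloring (along a bichromatic path, inside a block of $G$ that --- by non-completeness --- is neither complete nor an odd cycle) that breaks a monochromatic cycle without opening another. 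Carrying out that coordination is where essentially all the remaining work lies; the rest is bookkeeping.
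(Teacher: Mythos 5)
This is a result the paper imports from the literature (Borodin--Kostochka--Toft and Borowiecki--Drgas-Burchardt--Mih\'ok) rather than proves, so there is no in-paper argument to compare against; your attempt has to stand on its own, and as written it does not. The reduction to $k=\lceil\Delta(G)/2\rceil$, the greedy principle for $(2k-1)$-degenerate orderings, the odd-$\Delta$ case, and the non-regular even-$\Delta$ case (ordering by nonincreasing distance from a minimum-degree vertex) are all correct and standard. But the entire content of the theorem lives in the case you defer: $G$ connected, non-complete, $\Delta$-regular with $\Delta=2k\ge 4$. There you set up a two-stage minimization, derive the usual rigidity (every vertex on a monochromatic cycle sees exactly two neighbors in each list color, joined by a monochromatic path), and then explicitly write that the coordinated two-vertex recoloring needed to reach a contradiction ``is where essentially all the remaining work lies.'' That is not bookkeeping --- it is the theorem. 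In the Borodin--Kostochka--Toft framework one shows that the rigidity propagates until every block of $G$ is forced to be complete or an odd cycle with degrees exactly matching the list ``capacities'' (the Gallai-tree analogue for variable degeneracy), and then a separate argument handles such graphs when $G$ itself is not complete and not a cycle; none of that is present in your sketch, and it is not routine.

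One smaller inaccuracy in the part you did write out: when you move $v$ from $c_0$ to $c_1$ with the two $c_1$-neighbors in distinct components of $G[f^{-1}(c_1)]$, the strict decrease in $\sum_c\bigl(e(G[f^{-1}(c)])-|f^{-1}(c)|+\kappa(G[f^{-1}(c)])\bigr)$ actually comes from deleting $v$ from the class $f^{-1}(c_0)$ (where $v$ lies on a cycle whose other vertices keep the class connected), while the $c_1$ side contributes change $2-1-1=0$; the conclusion survives, but the accounting should be attributed correctly if you intend to complete the argument along these lines.
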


In 2013, Wu, Zhang, and Li~\cite{WZ13} introduced an equitable version of vertex arboricity.  Specifically, a graph $G$ is \emph{equitably vertex $k$-arborable} if there exists an arborable, vertex $k$-coloring $f$ of $G$ such that each pair of the $k$ color classes associated with $f$ differ in size by at most one (we call $f$ an \emph{equitable, arborable, vertex $k$-coloring of $G$})\footnote{Such a coloring is also referred to as an \emph{equitable tree-k-coloring} in the literature.}.  As in the case of equitable coloring, a graph that is equitably vertex $k$-arborable need not be equitably vertex $(k+1)$-arborable.  Indeed, $K_{9,9}$ is equitably vertex 2-arborable, but it is not equitably vertex 3-arborable.  The following conjecture of Wu, Zhang, and Li is well-known and has received some attention in the literature (see e.g.,~\cite{DD18, NZ20, WZ13, Z162, ZN20, ZN19, ZW14}).

\begin{conj} [Equitable Vertex Arboricity Conjecture~\cite{WZ13}] \label{conj: varbor}
Graph $G$ is equitably vertex $k$-arborable whenever $k \geq \lceil (\Delta(G)+1)/2 \rceil$.
\end{conj}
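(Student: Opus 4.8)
The final statement is a conjecture, so in place of a complete proof I describe the route I would take and where I expect it to stall. First, the non-equitable analogue is already known: for the range $k \ge \lceil (\Delta(G)+1)/2 \rceil$ one may even prescribe the lists, by Theorem~\ref{thm: upperarbor} together with the trivial small cases, so all of the difficulty in the conjecture is concentrated in the word \emph{equitably}. For the local surgery used below it is convenient to aim for a stronger target: a partition of $V(G)$ into $r := \lceil (\Delta(G)+1)/2 \rceil$ parts each inducing a subgraph of maximum degree at most $1$. Such a part is a disjoint union of edges and isolated vertices, hence acyclic; the target exists by Lov\'asz's partition theorem; and --- crucially --- it is preserved by the moves I describe next.

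The plan is to adapt the Hajn\'al--Szemer\'edi argument, in the streamlined Kierstead--Kostochka form, to this defective setting, inducting on $|V(G)|$. Delete a vertex $v$, take by induction an equitable $r$-partition of $G-v$ with every part of maximum degree at most $1$, and reinsert $v$. Call a part \emph{eligible} for $v$ if $v$ has at most one neighbor in it; since $d_G(v) \le \Delta(G) \le 2r-1$, not every part can contain two neighbors of $v$, so at least one part is eligible. If some eligible part also has room (placing $v$ there keeps the partition equitable), we finish. Otherwise every eligible part is full, and we must make room: fix an under-quota part $A_0$ (necessarily not eligible, so $v$ has exactly two neighbors there) and an eligible part $A_1$, look for a vertex $w \in A_1$ that may be moved to $A_0$ without breaking the degree bound in either part, move it, and slide $v$ into $A_1$. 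When no single such move exists one iterates: build an auxiliary digraph on the parts recording which vertices are movable between which pairs of parts, and argue as in Hajn\'al--Szemer\'edi that a maximal set of parts unreachable from $A_0$ contradicts a counting/discharging estimate on the edges between that set and its complement.

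The main obstacle is the extra bookkeeping the defect forces. In the classical ($0$-defect) proof a vertex is movable into a part exactly when it has no neighbor there; here each vertex carries a budget of one same-part neighbor, so every part splits into vertices of internal degree $0$ and ``fragile'' vertices of internal degree $1$, and moving a vertex into a part can be blocked both by the vertex's own neighbors and by the fragile vertices already present. The reachability digraph and the final discharging must be set up to respect this two-tier structure, and it is not clear that the Hajn\'al--Szemer\'edi potential still drops. For that reason I expect the classes actually settled in this paper (powers of cycles and paths, $2$-degenerate graphs) to be handled without any global rebalancing: color all but a bounded ``reservoir'' $S$ by an explicit periodic pattern that is automatically equitable and arborable on $G-S$, then invoke the general extension lemma of the abstract to complete the coloring on $S$, verifying only that each vertex of $S$ still sees enough colors whose classes are below quota. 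For $C_n^k$ the pattern and $|S|$ can be chosen to depend only on $k$ and on $n$ modulo $r$, reducing everything to finitely many residue classes.
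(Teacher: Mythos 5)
This statement is the Equitable Vertex Arboricity Conjecture of Wu, Zhang, and Li, quoted from~\cite{WZ13}; the paper states it only as background and offers no proof of it, so there is nothing to compare your attempt against, and you were right not to claim a complete argument. Note that the paper's actual contributions concern the \emph{list} analogue (Conjectures~\ref{conj: HSanalogue} and~\ref{conj: stronger}), and Proposition~\ref{pro: noimply} shows that equitable $k$-list arborability does not imply equitable vertex $k$-arborability, so none of the paper's results settle instances of this conjecture by specialization. Your guess about how the paper handles the classes it does treat is accurate: for powers of paths and cycles it deletes a set $S$ of $k$ or $2k$ consecutive vertices, colors $G-S$ by induction, and extends over $S$ via Lemma~\ref{lem: pelsmajerlike} or Lemma~\ref{lem: general}, checking that each vertex of $S$ retains enough safe colors.

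As for the sketch itself, two concrete points. First, strengthening the target from ``acyclic'' to ``maximum degree at most $1$'' breaks your eligibility count: if $v$ has exactly one neighbor $u$ in a part and $u$ already has internal degree $1$ there, inserting $v$ ruins that part, and with, say, $d_G(v)=r\le 2r-1$ neighbors distributed one per part, each of them fragile, every part can be blocked. The bound $d_G(v)\le 2r-1$ only guarantees an insertable part for the weaker forest target, where a vertex with at most one neighbor in a part can always be added; but then your movability analysis, which relies on the local degree-$1$ structure, no longer applies as stated. Second, the Hajn\'al--Szemer\'edi rebalancing step is exactly where the conjecture is open; you acknowledge that the potential/discharging argument may not survive the defect, which is honest, but it means the proposal is a research program rather than a proof --- the correct status for this statement, which remains a conjecture.
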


\subsection{Equitable List Arboricity}

In 2016, Zhang~\cite{Z16} introduced a list analogue of equitable vertex arboricity which is the focus of this paper.  Graph $G$ is \emph{equitably $k$-list arborable} if for every $k$-assignment $L$ for $G$ there is an arborable $L$-coloring $f$ of $G$ such that each color class of $f$ is of size at most $\lceil |V(G)|/k \rceil$ (we call $f$ an \emph{equitable, arborable $L$-coloring of $G$}).  Zhang made the following conjecture which is a list analogue of the Equitable Vertex Arboricity Conjecture.

\begin{conj} [\cite{Z16}] \label{conj: HSanalogue}
Any graph $G$ is equitably $k$-list arborable for each $k$ satisfying $k \geq \lceil (\Delta(G)+1)/2 \rceil$.
\end{conj}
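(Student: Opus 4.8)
The plan is to attack the conjecture directly by an augmentation argument in the spirit of the Hajn\'{a}l--Szemer\'{e}di theorem, adapted so that each color class need only induce a forest and so that the equitability constraint is one-sided. Fix a $k$-assignment $L$ with $k \geq \lceil (\Delta(G)+1)/2 \rceil$, set $n = |V(G)|$ and $b = \lceil n/k \rceil$; the goal is an $L$-coloring in which every color class induces a forest and no color is used more than $b$ times. The first observation is that arboricity alone is never the binding constraint for a single vertex: since $|L(v)| = k$ and $d_G(v) \leq \Delta(G) \leq 2k-1$, if every color of $L(v)$ appeared on at least two neighbors of $v$ then $v$ would have at least $2k$ neighbors, a contradiction. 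Hence for any partial arborable $L$-coloring of $G - v$ there is a color $c \in L(v)$ on at most one neighbor of $v$, and assigning $c$ to $v$ adds a vertex of degree at most one to the forest induced by class $c$, leaving that class a forest. So the only real difficulty is the size bound.

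Second, I would induct on $n$. Deleting a vertex $v$ gives $G - v$ with $\Delta(G - v) \leq \Delta(G)$, so the induction hypothesis applies to $L$ restricted to $G - v$ and yields an arborable $L$-coloring of $G - v$ with every class of size at most $b' = \lceil (n-1)/k \rceil \leq b$. If $b' < b$ --- equivalently $n \equiv 1 \pmod{k}$ --- then inserting $v$ by the arboricity-safe color above keeps every class of size at most $b' + 1 \leq b$ and we finish at once. The genuine case is the complementary one, $b' = b$, where it can happen that every arboricity-safe color for $v$ already labels exactly $b$ vertices. Here rebalancing is forced: one must find a chain of recolorings, an augmenting sequence of moves $w_1 \to w_2 \to \cdots$ in which each $w_i$ is shifted out of an overfull class into a class where it has at most one neighbor and which currently holds fewer than $b$ vertices, so that after the chain some arboricity-safe, non-full color is freed for $v$.

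The hard part will be carrying out this augmentation while preserving acyclicity of every class at every step. In ordinary equitable coloring a vertex is moved only into a class where it has no neighbor, so independence is maintained automatically; for arboricity we are permitted, and often compelled, to move a vertex into a class where it has a single neighbor, after which a later move along the same chain can close a cycle through the edge just created. Controlling this cycle-closure globally --- across all $k$ classes at once and along a possibly long augmenting sequence in an arbitrary graph --- is precisely the obstruction that leaves the conjecture open in full generality. The route I would therefore pursue is to isolate a structural extension lemma describing exactly when a partial arborable $L$-coloring of an induced subgraph extends without breaching the size bound, and then verify its hypotheses for $G$; for unrestricted $G$ this lemma is the missing ingredient, whereas for sufficiently structured families the required augmentations can be specified explicitly, which is the mechanism by which the conjecture is confirmed in the cases treated in this paper.
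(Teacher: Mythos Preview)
This statement is Conjecture~\ref{conj: HSanalogue}, which is open; the paper does not prove it, and your proposal is honest enough to concede as much in its closing lines. So there is no general proof to compare against --- only the devices the paper uses to verify the conjecture for the specific families it treats.

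Your single-vertex induction with an augmenting-chain repair is a natural first attack, and your diagnosis of why it stalls is accurate: once one permits moving a vertex into a class where it has exactly one neighbor, later moves in the same chain can close a cycle through the edge just created, and no one knows how to control this globally for arbitrary $G$. But this is \emph{not} how the paper proceeds even in its special cases. Rather than deleting one vertex and rebalancing, the paper deletes a block $S$ of $k$ (or $mk$) vertices at once and invokes an extension lemma --- Lemma~\ref{lem: pelsmajerlike} of Zhang, generalized here to Lemma~\ref{lem: general} --- which guarantees that an equitable, arborable coloring of $G-S$ extends to $G$ whenever the vertices of $S$ can be ordered with $|N_G(x_i)-S|\le 2i-1$ (or, in the general version, whenever $G[S]$ admits an arborable $D$-coloring meeting the safe/dangerous conditions). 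Because the $k$ vertices of $S$ receive pairwise distinct colors in the basic lemma, the size bound on every class rises by exactly one and no augmenting chain is ever needed; acyclicity is handled \emph{locally} through the safe/dangerous bookkeeping rather than globally along a chain of recolorings. All the work shifts to exhibiting such an $S$ in the graph at hand, which is precisely what the paper does for $2$-degenerate graphs (Theorem~\ref{thm: 2degen}), powers of paths (Theorem~\ref{pro: pathpower}, Proposition~\ref{pro: p-1}), and powers of cycles (Theorem~\ref{pro: cyclepower}). Your augmenting-chain framework would instead demand a global rebalancing argument that remains exactly the missing ingredient for the full conjecture.
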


Complete graphs demonstrate the tightness of the bound in Conjecture~\ref{conj: HSanalogue} since it is easy to prove that $K_n$ is equitably $k$-list arborable if and only if $k \geq \lceil n/2 \rceil = \lceil (1+\Delta(K_n))/2 \rceil$.  Furthermore, all $n$-vertex graphs are equitably $k$-list arborable when $k \geq \lceil n/2 \rceil$ because if $G$ is equitably $k$-list arborable, then any spanning subgraph of $G$ must also be equitably $k$-list arborable.  Conjecture~\ref{conj: HSanalogue} has been verified for 2-degenerate graphs, 3-degenerate claw-free graphs with maximum degree at least~4, and planar graphs with maximum degree at least 8~\cite{Z16}.  It has also been verified for $d$-dimensional grids when $d \in \{2,3,4\}$~\cite{DD18}.

With the Equitable $\Delta$-Coloring Conjecture, Conjecture~\ref{conj: KPW2}, and Theorem~\ref{thm: upperarbor} in mind, we conjecture that the bound in Conjecture~\ref{conj: HSanalogue} can be improved.

\begin{conj} \label{conj: stronger}
Any connected graph $G$ is equitably $\lceil \Delta(G)/2 \rceil$-list arborable provided $G$ is neither a cycle nor a complete graph of odd order.
\end{conj}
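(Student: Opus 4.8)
The plan is to fix $k = \lceil \Delta(G)/2 \rceil$ and an arbitrary $k$-assignment $L$, set $n = |V(G)|$ and $q = \lceil n/k \rceil$, and produce an arborable $L$-coloring in which every color class has size at most $q$. First I would dispose of the low-degree and complete cases. If $\Delta(G) \le 2$ then $G$ is a path or a cycle; cycles are excluded, and for a path $k = 1$ and the single color class is all of $V(G)$, which is acyclic and trivially equitable. If $G$ is complete then $G = K_n$ with $n$ even (odd order is excluded), so $k = \lceil (n-1)/2 \rceil = n/2 = \lceil n/2 \rceil$, and the quoted characterization that $K_n$ is equitably $k$-list arborable iff $k \ge \lceil n/2 \rceil$ settles this case. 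Thus I may assume $G$ is connected, non-complete, and $\Delta(G) \ge 3$, whence $k \ge 2$ and, by Theorem~\ref{thm: upperarbor}, $G$ is $k$-list arborable. The payoff of this first step is a clean reformulation: an arborable $L$-coloring always exists, so the entire difficulty of the conjecture lies in meeting the size bound $q$, i.e., in balancing the color classes.

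The main strategy is a Hajn\'al--Szemer\'edi--style construction driven by an extension lemma for partial equitable, arborable list colorings. I would color the vertices one at a time in an adaptively chosen order, maintaining the invariant that the partial coloring is \emph{admissible}: it is an arborable $L$-coloring of the colored vertices in which no class exceeds $q$. The placement rule for the next vertex $v$ exploits the weakness of the arboricity constraint relative to properness: a color $c \in L(v)$ is \emph{blocked} for $v$ only when $v$ already has two neighbors colored $c$ lying in the same tree of the current forest $G[f^{-1}(c)]$. A crude count then shows at most $\lfloor \Delta(G)/2 \rfloor$ of the $k = |L(v)|$ colors can be blocked, which leaves a free color outright when $\Delta(G)$ is odd; the even case, where all $k$ colors may be blocked, is the point at which the adaptive choice of ordering (in the spirit of Theorem~\ref{thm: upperarbor}) must be used. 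When some color class is simultaneously unblocked and non-full, $v$ is placed directly; the difficulty arises precisely when every unblocked color for $v$ is full, and there I would invoke the extension lemma to perform a rebalancing move that transfers a vertex out of a full class along an auxiliary chain (an arboricity analogue of a Kempe chain), freeing a slot for $v$ while restoring admissibility.

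The hard part, and the true content of the conjecture, is proving that such a rebalancing chain always exists and terminates. In ordinary equitable coloring a two-color Kempe chain stays valid automatically because each class is independent; here a single swap can merge two trees of a forest into a cycle, so the chain must be engineered to preserve the forest property in \emph{both} affected classes at every step. I expect to control this by tracking the component structure of each forest together with a potential function measuring how close each class is to containing a cycle, and to prove that connectivity of $G$ combined with the hypothesis that $G$ is neither a cycle nor an odd complete graph guarantees a valid terminating chain. This is also exactly where the exclusions must bite: for $C_{2m+1}$ and $K_{2m+1}$ the relevant parity prevents the final slot from ever being freed, matching the failure of the analogous step in the Equitable $\Delta$-Coloring Conjecture. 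An alternative and possibly cleaner route is to start from the Theorem~\ref{thm: upperarbor} coloring, ignoring sizes, and repeatedly move a vertex from an oversized class to an undersized one via a valid arborable swap; this isolates the obstruction into a single termination argument, but establishing that the exceptional graphs are the only ones on which the process can stall is the crux, and it is the step I expect to resist a short proof.
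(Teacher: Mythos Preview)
The statement you are attempting to prove is Conjecture~\ref{conj: stronger}, and the paper does \emph{not} prove it. The paper introduces the conjecture and verifies it only for a handful of special classes (complete graphs minus an edge, $2\ell$-regular graphs on $2\ell+2$ vertices, $2$-degenerate graphs with $\Delta\ge 3$, powers of paths, and some small-degree cases in Theorem~\ref{thm: four}). There is therefore no ``paper's own proof'' to compare against, and the conjecture remains open in general.

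Your proposal is a reasonable high-level plan, and the preliminary reductions (paths, even complete graphs, the invocation of Theorem~\ref{thm: upperarbor} to guarantee existence of some arborable $L$-coloring) are fine. But the core of your argument --- the ``rebalancing chain'' or arboricity-analogue of a Kempe chain --- is not actually carried out; you yourself write that establishing termination ``is the step I expect to resist a short proof.'' That is precisely the gap. In the ordinary equitable setting, Kempe-chain and cascade arguments of this type are already delicate (the Equitable $\Delta$-Coloring Conjecture is still open), and in the arboricity setting a single swap can merge two trees into a cycle, so each move must simultaneously preserve the forest property in both affected classes while making progress on the size imbalance. You identify this difficulty correctly but supply no mechanism to overcome it: no potential function is defined, no invariant is proved to decrease, and no argument is given for why the excluded graphs (odd cycles and odd complete graphs) are the \emph{only} obstructions to termination. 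As it stands the proposal is an outline of a strategy, not a proof; the missing ingredient is exactly the content of the conjecture.
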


It is shown in~\cite{DF20} that Conjecture~\ref{conj: stronger} holds for 3-dimensional grids, and it is shown in~\cite{LZ21} that Conjecture~\ref{conj: stronger} holds for graphs with treewidth 2 and maximum degree at least 9.  In this paper we will show that Conjecture~\ref{conj: stronger} holds for several additional classes of graphs.  Note that a recent paper~\cite{ZZ21} considers a generalization of equitable list arboricity which generalizes and cites some of our work here (which had already appeared on arXiv.org).

\subsection{Outline of the Paper}
In Section~\ref{lack} we begin by showing that, similar to the relationship between equitable choosability and equitable colorability, it is not the case that for each $k \in \N$ if graph $G$ is equitably $k$-list arborable then $G$ is equitably vertex $k$-arborable.\footnote{When $k \in \{1,2\}$, it is easy to prove that if $G$ is equitably $k$-list arborable, then $G$ is equitably vertex $k$-arborable.}  Specifically, we show that $K_{4,15}$ is not equitably vertex 3-arborable, but it is equitably 3-list arborable.

In~\cite{DD18}, the authors state that they are not aware of any results in the literature that address the following question:  If graph $G$ is equitably $k$-list arborable, must it follow that $G$ is equitably $(k+1)$-list arborable?  We end Section~\ref{lack} by demonstrating that the answer to this question is no.  In particular, we use a combination of probabilistic and algorithmic arguments to show that $K_{11,17}$ is equitably 3-list arborable, but it is not equitably 4-list arborable.

In Section~\ref{conjecture} we verify Conjectures~\ref{conj: HSanalogue} and~\ref{conj: stronger} for complete graphs minus an edge, $2\ell$-regular graphs of order $2\ell+2$, and 2-degenerate graphs $G$ with $\Delta(G) \geq 3$.
The next result will imply that Conjectures~\ref{conj: HSanalogue} and~\ref{conj: stronger} hold for all powers of paths.

\begin{thm} \label{pro: pathpower}
For $n, p \in \N$, suppose that $G= P_{n}^p$.  Then, $G$ is equitably $k$-list arborable for each $k$ satisfying $k \geq p$.
\end{thm}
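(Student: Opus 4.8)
The plan is to induct on $n$, peeling off a small block of vertices from one end of the path power $P_n^p$, applying the inductive hypothesis to what remains, and then extending the coloring to the peeled-off block. Fix $k \geq p$ and a $k$-assignment $L$ for $G = P_n^p$, and write the vertices of the underlying path in order as $v_1, \ldots, v_n$. Let $s = \lceil n/k \rceil$ be the target upper bound on color-class sizes. The natural move is to remove the last $r$ vertices $v_{n-r+1}, \ldots, v_n$ for an appropriate $r$ (roughly $r = k$, or $r = n - k(s-1)$ so that the truncated path power has $s' = s-1$ as its ceiling), color $G' = P_{n-r}^p$ equitably from $L$ restricted to $V(G')$ by induction, and then insert the removed vertices back in. The key structural fact is that each $v_i$ with $i > n-r$ has all of its neighbors among $\{v_{i-p}, \ldots, v_{i-1}, v_{i+1}, \ldots, v_n\}$, so when we process $v_{n-r+1}, \ldots, v_n$ in order, each newly colored vertex $v_i$ sees at most $p \leq k$ already-colored neighbors to its left plus the not-yet-colored ones to its right.

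The real engine here should be the "new lemma" the abstract advertises as a general tool for extending partial equitable arborable colorings; since that lemma precedes this theorem in the paper, I would invoke it directly. Concretely, I would set up the hypotheses of that extension lemma with $G'$ already colored equitably and the set $S = \{v_{n-r+1}, \ldots, v_n\}$ to be added, checking two things: (i) a \emph{degree/acyclicity} condition — that $G[S]$ together with the edges from $S$ back into $G'$ is sparse enough that for each candidate color $c$ and each $v \in S$, we can avoid creating a cycle in color class $c$; since $d_{G'}(v_i) \cap (\text{left neighbors}) \le p \le k = |L(v_i)|$, there is always a color whose current class, restricted to the left-neighbors of $v_i$, is a forest to which $v_i$ can be appended without a cycle (in fact a color used at most once among those $\le p$ neighbors, or one creating no cycle), and (ii) a \emph{capacity} condition — that adding $S$ does not overfill any color class beyond $s = \lceil n/k \rceil$. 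For (ii) the accounting is: the $s' = \lceil (n-r)/k \rceil$ classes on $G'$ have size $\le s'$, and we need each to grow by a bounded amount; choosing $r$ so that $s' \in \{s-1, s\}$ and $|S| = r$ is comparable to $k$ makes this work, possibly with a greedy "place each new vertex into a least-full admissible class" rule.

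The step I expect to be the main obstacle is reconciling the acyclicity requirement with the equitability (capacity) requirement \emph{simultaneously}: greedily putting $v_i$ into the least-full color class may be exactly the class in which $v_i$'s left-neighbors already form a path, creating a cycle, while the cycle-free classes may already be full. The resolution is to exploit the slack coming from $k \ge p$ — each $v_i$ has at most $p$ relevant left-neighbors but $k \ge p$ colors, so the set of "safe" colors (no cycle created) has size at least $k - p + (\text{number of those } p \text{ neighbors sharing a color}) \ge 1$, and one shows this set is large enough, combined with the fact that only $r \approx k$ insertions happen, to always land in a class below capacity. If the bare extension lemma does not quite deliver this, the fallback is to order the insertions and the color choices more carefully (e.g., process $v_{n-r+1}, \ldots, v_n$ and at each step maintain the invariant that all class sizes are within the target), or to handle small $n$ (where $n \le k$, so $s = 1$ forces a proper list coloring, handled by Theorem~\ref{thm: upperarbor}-type reasoning or directly since $P_n^p$ with $n \le p+1$ is complete) as base cases. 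I would also separately check the boundary case $k = p$ against small complete subgraphs $P_{p+1}^p = K_{p+1}$ to make sure the claimed bound is not violated there — since $K_{p+1}$ is equitably $k$-list arborable iff $k \ge \lceil (p+1)/2 \rceil$, and $p \ge \lceil (p+1)/2 \rceil$ holds for all $p \ge 1$, this is consistent.
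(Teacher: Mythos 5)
Your skeleton matches the paper's (induct on $n$, peel a block $S$ off one end of $P_n^p$, color $G-S$ by induction, extend via an extension lemma), but the extension step --- which you yourself flag as ``the main obstacle'' --- is where there is a genuine gap, and the specific argument you sketch for it would fail. The paper takes $S=\{v_1,\dots,v_k\}$ to be \emph{exactly} $k$ vertices (this exactness is what makes the capacity bookkeeping work, since $\lceil (n-k)/k\rceil = \lceil n/k\rceil - 1$), and observes that with this indexing $|N_G(v_i)-S| = \max\{0,\, p-(k-i)\} \le i \le 2i-1$ for each $i\in[k]$ --- this is precisely the hypothesis of Zhang's Lemma~\ref{lem: pelsmajerlike} (the $m=1$ instance of Lemma~\ref{lem: general}). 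That lemma colors the $k$ vertices of $S$ with \emph{pairwise distinct} colors, working from $v_k$ down to $v_1$: at most $k-i$ colors are excluded at $v_i$ because they were already used on $S$, and at most $\lfloor(2i-1)/2\rfloor = i-1$ are excluded because they appear at least twice on $N_G(v_i)-S$, so a color always remains. Distinctness is what delivers acyclicity and equitability \emph{simultaneously}: each color class grows by at most one, and each newly colored vertex attaches to its old color class by at most one edge.

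Your substitute --- each $v_i$ has at least $k-p+\cdots \ge 1$ safe colors, so place it in a least-full admissible class --- does not close the gap. Having one admissible color per vertex of $S$ could force many (even all $k$) vertices of $S$ into the same class, overfilling it by far more than one, and you do not analyze the least-full-class greedy; indeed you correctly identify that admissibility and capacity can conflict but then defer the resolution to an unspecified ``more careful ordering.'' The slack $k\ge p$ is used in the paper not to produce one safe color per vertex but to guarantee the graded bound $|N_G(v_i)-S|\le 2i-1$ under the right ordering of $S$, which is the missing ingredient. Your hedging over the block size $r$ also needs to be resolved to $r=k$ exactly for the class-size accounting. Your base-case and $K_{p+1}$ sanity checks are fine (the paper's base case is $n\le 2k-1$, using that every $n$-vertex graph is equitably $k$-list arborable once $k\ge\lceil n/2\rceil$).
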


If $G=P_n^p$ and $n \geq 2p$, then $\lceil \Delta(G)/2 \rceil = p$, so Conjectures~\ref{conj: HSanalogue} and~\ref{conj: stronger} holds for such $G$ by Theorem~\ref{pro: pathpower}.  If $G=P_n^p$ and
$p+1 < n < 2p$, then $\Delta(G)=n-1$ and
$G=P_n^p$ is a spanning subgraph of a complete graph on $n$ vertices minus an edge, so $G$ is equitably $k$-list arborable whenever $k \geq \lceil (n-1)/2 \rceil =\lceil \Delta(G)/2 \rceil$.  If $n\le p+1$ then $P_n^p$ is a complete graph, a known case.

Section~\ref{conjecture} ends by studying Conjecture~\ref{conj: stronger} for graphs of maximum degree at most~4.


In Section~\ref{general} we prove the following result, a generalization of a lemma proven by Zhang in 2016~\cite{Z16} (see Lemma~\ref{lem: pelsmajerlike} in Section~\ref{conjecture} below). This tool helps us to recognize a set $S$ of vertices in a graph $G$ for which an equitable, arborable list coloring of $G-S$ can be extended to an equitable, arborable list coloring of $G$.



\begin{lem} \label{lem: general}
	Suppose $m \in \N$ and $S= \{x_1, \ldots, x_{mk} \}$ where $x_1, \ldots, x_{mk}$ are distinct vertices of $G$.  Suppose that $L$ is a $k$-assignment for $G$, and $L'$ is the $k$-assignment for $G-S$ obtained by restricting the domain of $L$ to $V(G-S)$. Let $f$ be an equitable, arborable $L'$-coloring of $G-S$.
	
Let $D$ be the list assignment of $G[S]$ defined by $D(v) = L(v) - \{ c \in L(v) : |f^{-1}(c) \cap N_G(v)| \geq 2 \}$.	Suppose that there is an arborable $D$-coloring $g$ of $G[S]$ such that (i) $g$ uses no color more than $m$ times and (ii) for each $c \in g(S)$ there is at most one vertex $v \in g^{-1}(c)$ with the property that $c$ is dangerous with respect to $v$.  Then the function $h : V(G) \rightarrow \bigcup_{v \in V(G)} L(v)$ given by
	\[ h(v) = \begin{cases}
	f(v) & \text{if $v \notin S$} \\
	g(v) & \text{if $v \in S$}
	\end{cases}
	\]
	is an equitable, arborable $L$-coloring of $G$.
\end{lem}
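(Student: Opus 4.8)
The plan is to verify directly that $h$ satisfies the three defining conditions of an equitable, arborable $L$-coloring of $G$: that $h$ is an $L$-coloring, that no color class of $h$ exceeds $\lceil |V(G)|/k\rceil$, and that $G[h^{-1}(c)]$ is a forest for every color $c$. The first condition is immediate from the construction: off $S$, $h$ agrees with $f$, which takes values in $L$; on $S$, $h$ agrees with $g$, which takes values in $D$, and $D(v)\subseteq L(v)$ for every $v\in S$.

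For the size bound, I would write $n=|V(G)|$ and use that $|V(G-S)|=n-mk$ with $mk$ a multiple of $k$, so the equitability bound for $f$ on $G-S$ reads $\lceil (n-mk)/k\rceil=\lceil n/k\rceil-m$. Since $h^{-1}(c)$ is the disjoint union of $f^{-1}(c)\subseteq V(G-S)$ (of size at most $\lceil n/k\rceil-m$) and $g^{-1}(c)\subseteq S$ (of size at most $m$, by hypothesis (i)), we get $|h^{-1}(c)|\le \lceil n/k\rceil$. This part is routine bookkeeping.

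The substantive step is acyclicity. Fix a color $c$. Because $f^{-1}(c)\subseteq V(G-S)$ and $G-S$ is an induced subgraph, $G[f^{-1}(c)]$ coincides with the subgraph of $G-S$ induced by $f^{-1}(c)$ and is therefore a forest (since $f$ is arborable); similarly $G[g^{-1}(c)]\subseteq G[S]$ is a forest (since $g$ is arborable). Thus $G[h^{-1}(c)]$ is the disjoint union of these two forests together with the edges of $G$ running between $f^{-1}(c)$ and $g^{-1}(c)$. The key claim is that there is at most one such ``crossing'' edge: every $v\in g^{-1}(c)$ has $c\in D(v)$, so by the definition of $D$ it has at most one neighbor in $f^{-1}(c)$, and possessing such a neighbor is exactly what makes $c$ dangerous with respect to $v$; hypothesis (ii) then permits at most one vertex of $g^{-1}(c)$ to have a neighbor in $f^{-1}(c)$. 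With at most one crossing edge present, $G[h^{-1}(c)]$ is a forest with at most one extra edge joining two of its components — a bridge — and so it remains a forest. Applying this to every color $c$ shows $h$ is arborable, completing the proof.

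I expect the only real obstacle to be the claim about crossing edges: one must carefully combine the definition of $D$ (which caps the number of $f$-neighbors of a color $c$ at one, once $c$ survives into $D(v)$) with the definition of a dangerous color and condition (ii) (which caps the number of vertices in a class that actually realize such a neighbor at one) to conclude that each color class of $h$ picks up at most a single crossing edge. Once that is established, the acyclicity conclusion is immediate because an edge joining two distinct components of a forest cannot lie on a cycle, and the remaining verifications are purely mechanical.
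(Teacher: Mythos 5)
Your proof is correct and follows essentially the same route as the paper's: the same decomposition of each color class $h^{-1}(c)$ into the two vertex-disjoint forests $G[f^{-1}(c)]$ and $G[g^{-1}(c)]$, the same use of the definition of $D$ together with hypothesis (ii) to bound the number of crossing edges by one, and the same equitability bookkeeping via $\lceil(n-mk)/k\rceil=\lceil n/k\rceil-m$. No gaps.
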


Above, color $c \in L(v)$ is \emph{dangerous with respect to $v$} if  $|f^{-1}(c) \cap N_G(v)| = 1$; that is, a neighbor of $v$ is already colored with $c$ by $f$.

We use Lemma~\ref{lem: general} to first improve Theorem~\ref{pro: pathpower} when $p \geq 3$.

\begin{pro} \label{pro: p-1}
Suppose $n,p \in \N$, $p \geq 3$, and $G=P_n^p$.  Then, $G$ is equitably $(p-1)$-list arborable.
\end{pro}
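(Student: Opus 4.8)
The plan is to prove, by strong induction on $n$, a slightly stronger statement: for every $(p-1)$-assignment $L$ of $P_n^p$ (with $n \ge 2$ and vertices $v_1, \dots , v_n$ written in order) there is an equitable, arborable $L$-coloring $f$ with $f(v_{n-1}) \neq f(v_n)$. Strengthening in this way is forced by the inductive step: after deleting the last $p-1$ vertices we will need to know that the two endmost vertices of the smaller graph received distinct colors. For the base cases $n \le 2p-1$ I would argue as follows. If $n \le p+1$ then $P_n^p = K_n$, and if $p+1 < n \le 2p-1$ then $P_n^p$ is a spanning subgraph of a complete graph on $n$ vertices minus an edge; in either case $P_n^p$ is equitably $(p-1)$-list arborable because $p-1 \ge \lceil n/2 \rceil$ (respectively $p-1 \ge \lceil (n-1)/2 \rceil$), these inequalities holding exactly because $p \ge 3$, and these are known cases. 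The extra requirement that $v_{n-1}$ and $v_n$ get distinct colors is then obtained by a direct argument: when $\lceil n/(p-1)\rceil = 1$ one needs a rainbow coloring, which exists by Hall's theorem since every list has size $p-1\ge n$; otherwise one first gives $v_n$ and then $v_{n-1}$ distinct colors from their lists and colors the remaining vertices greedily, using that at each step strictly fewer than $p-1$ colors have been used to their full allowed multiplicity.

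For the inductive step, suppose $n \ge 2p$. I would apply Lemma~\ref{lem: general} with $k = p-1$, $m = 1$, and $S = \{v_{n-p+2}, \dots , v_n\}$, so that $G - S = P_{n-p+1}^p$ has fewer vertices than $G$ and at least $p+1$ of them; the strengthened induction hypothesis then supplies an equitable, arborable $L'$-coloring $f$ of $G - S$ whose two endmost vertices $v_{n-p}$ and $v_{n-p+1}$ receive different colors. The key structural facts, all easy to verify from the definition of $P_n^p$, are: (a) the set $W$ of vertices of $G - S$ having a neighbor in $S$ is exactly $\{v_{n-2p+2}, \dots , v_{n-p+1}\}$, which consists of $p$ consecutive vertices and hence induces a $K_p$ inside $G - S$, so, $f$ being arborable, at most $\lfloor p/2\rfloor$ colors occur twice on $W$; (b) for each $v_i \in S$ the neighbors of $v_i$ in $G - S$ form a suffix of $W$, of length $n-i+2$, so the colors deleted in passing from $L(v_i)$ to $D(v_i)$ are precisely the colors occurring twice on that suffix; (c) since the neighbors of $v_n$ in $G - S$ are just $v_{n-p}$ and $v_{n-p+1}$, which $f$ colored differently, $D(v_n) = L(v_n)$ has size $p-1$. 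Note also that $G[S] = K_{p-1}$.

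It then remains to produce an arborable $D$-coloring $g$ of $K_{p-1}$ using each color at most once; this is the same as a rainbow $D$-coloring, equivalently a system of distinct representatives for $(D(v))_{v \in S}$. For such a $g$ condition (ii) of Lemma~\ref{lem: general} is automatic and arborability is trivial, so Lemma~\ref{lem: general} then yields the desired equitable, arborable $L$-coloring $h$ of $G$; and since $h$ restricts to the rainbow coloring $g$ on $\{v_{n-1},v_n\} \subseteq S$, it satisfies $h(v_{n-1}) \neq h(v_n)$, closing the induction. To obtain the system of distinct representatives I would check Hall's condition. If it failed there would be $T \subseteq S$ with $|\bigcup_{v\in T} D(v)| < |T| =: t$; let $v^*$ be the vertex of $T$ of largest index and let $r^*$ be its distance from the end of the path, so $r^* \le p-1-t$ because the $t$ vertices of $T$ have $t$ distinct such distances, all at most $p-2$. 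By (a) and (b), $|D(v^*)| \ge (p-1) - \lfloor (p-t+1)/2\rfloor$, since the suffix of $W$ adjacent to $v^*$ has length $r^*+2 \le p-t+1$; combining this with $|D(v^*)| < t$ and writing $s = p-1-t \ge 0$ reduces to $\lfloor s/2\rfloor \ge s$, which forces $s = 0$, i.e. $t = p-1$, $T = S$, and $v^* = v_n$; this contradicts (c), since $|D(v_n)| = p-1 = t$.

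I expect the main work to lie in identifying the structural facts (a)--(c) and in choosing the right strengthened hypothesis. The point is that arborability of $f$ on the clique $G[W] = K_p$ bounds the shrinkage of each list by $|D(v)| \ge (p-1) - \lfloor |W \cap N_G(v)|/2\rfloor$, and the hypothesis must be strengthened precisely so that $D(v_n)$ — the list of the one vertex of $S$ whose relevant neighborhood is only $\{v_{n-p},v_{n-p+1}\}$ — is undiminished; once these are in place the Hall computation is short and mechanical and Lemma~\ref{lem: general} does the rest. The only other delicate point is establishing the strengthened property in the base cases, which is routine but slightly technical.
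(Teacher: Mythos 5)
Your inductive step is correct and takes a genuinely different route from the paper's. The paper deletes the \emph{first} $2(p-1)$ vertices (so $m=2$ in Lemma~\ref{lem: general}) and builds the $D$-coloring of $G[S]$ by a two-pass greedy argument with a case analysis on $|D(v_p)|$, using each color at most twice; you delete only the last $p-1$ vertices (so $m=1$), which reduces the problem to finding a system of distinct representatives for the lists $D(v)$, $v\in S$, verified via Hall's condition. The price is the strengthened induction hypothesis $f(v_{n-1})\neq f(v_n)$ --- which you correctly identify as exactly what is needed to keep $D(v_n)$ undiminished and thereby kill the $T=S$ case of Hall's condition --- together with a larger set of base cases ($n\le 2p-1$ rather than $n\le 2p-2$). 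Your facts (a)--(c) and the Hall computation all check out; note that the bound $|D(v^*)|\ge (p-1)-\lfloor(r^*+2)/2\rfloor$ is just Observation~\ref{obs: obvious} and does not actually require the clique structure of $W$. What each approach buys: the paper's choice of $S$ makes the base case trivial but the extension argument fiddly; yours makes the extension a clean SDR argument but pushes the difficulty into the base case.

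The one genuine gap is the base case $n=2p-1$, where $\lceil n/(p-1)\rceil=3$. Your greedy (``at each step strictly fewer than $p-1$ colors have been used to their full allowed multiplicity'') guarantees equitability but not arborability: a color used three times may induce a triangle in $P_{2p-1}^p$. Concretely, for $p=3$, $n=5$, all lists equal to $\{1,2\}$, the partial coloring $v_5\mapsto 1$, $v_4\mapsto 2$, $v_1\mapsto 1$, $v_2\mapsto 2$ leaves both colors of $L(v_3)$ below full multiplicity, yet $v_3\mapsto 2$ makes $\{v_2,v_3,v_4\}$ a monochromatic triangle. (If instead you cap every color at multiplicity $2$, the counting fails at the last step, since $\lfloor(2p-2)/2\rfloor=p-1$ colors may already be doubled there.) The case is rescuable: run the cap-$2$ greedy in an order that starts with $v_{2p-1},v_{2p-2}$ (given distinct colors) and ends at $v_1$; if the last step is stuck, then all $p-1$ colors of $L(v_1)$ appear exactly twice on $v_2,\dots,v_{2p-1}$, so $c=f(v_{2p-1})\in L(v_1)$, and assigning $c$ to $v_1$ creates a single class of size $3$ containing the non-adjacent pair $v_1,v_{2p-1}$ (non-adjacent since $2p-2\ge p+1$ for $p\ge 3$), hence no cycle. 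With that repair, and the routine cases $n\le 2p-2$, your proof is complete.
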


On the other hand, $G=P_n^p$ contains a complete graph on $p+1$ vertices whenever $n\geq p+1$, in which case $G$ is not $k$-arborable for $2k<p+1$ (since it is not even $k$-list arborable), i.e., for $k  < \lceil (p+1)/2 \rceil$.
This observation along with Theorem~\ref{pro: pathpower} and Proposition~\ref{pro: p-1} lead us to make the following conjecture.

\begin{conj} \label{conj: pathpower}
For any $n, p \in \N$ with $n \geq p+1$, $G = P_n^p$ is equitably $k$-list arborable if and only if $k \geq \lceil (p+1)/2 \rceil$.
\end{conj}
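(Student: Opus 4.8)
We sketch a strategy for Conjecture~\ref{conj: pathpower}; the two implications have quite different character. For the forward direction, suppose $n\ge p+1$ and $k<\lceil(p+1)/2\rceil$, so $2k\le p$. The vertices $v_1,\dots,v_{p+1}$ are pairwise within distance $p$ and hence induce a copy of $K_{p+1}$; under the constant list assignment $L\equiv[k]$, an arborable $L$-coloring of $P_n^p$ would restrict to an arborable $k$-coloring of this $K_{p+1}$, but by pigeonhole some color class of the latter has at least $\lceil(p+1)/k\rceil\ge3$ vertices, which induce a triangle. Thus $P_n^p$ is not $L$-arborable, so it is not equitably $k$-list arborable; this is exactly the observation recorded before the conjecture. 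The content of the conjecture is therefore the reverse implication: $P_n^p$ is equitably $k$-list arborable whenever $k\ge\lceil(p+1)/2\rceil$.

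Theorem~\ref{pro: pathpower} handles $k\ge p$ and Proposition~\ref{pro: p-1} handles $k=p-1$ (for $p\ge3$), and since $\lceil(p+1)/2\rceil\ge p-1$ when $p\le4$, only the cases $p\ge5$ and $\lceil(p+1)/2\rceil\le k\le p-2$ remain. Fix such $p$ and $k$ and induct on $n$. The base cases $n\le p+2$ are known: $P_n^p$ is $K_n$ for $n\le p+1$ and $K_{p+2}$ minus an edge for $n=p+2$, both equitably $k$-list arborable once $k\ge\lceil(p+1)/2\rceil$ (complete graphs as above; complete graphs minus an edge from Section~\ref{conjecture}). So assume $n\ge p+3$, fix a $k$-assignment $L$ for $G=P_n^p$, and suppose the statement for all path $p$-th powers on fewer vertices.

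Apply Lemma~\ref{lem: general} with $S=\{v_{n-mk+1},\dots,v_n\}$ the last $mk$ vertices, where $m=1$ unless $k=\lceil(p+1)/2\rceil$, in which case $m=2$. Then $G[S]=P_{mk}^p$ is a clique $K_{mk}$ --- this covers $m=1$, since $k\le p-2$, and $m=2$ when $p$ is odd --- or else $K_{p+2}$ minus an edge (the residual case $p$ even, $k=p/2+1$, $m=2$); and $G-S=P_{n-mk}^p$ has fewer vertices, so by induction it has an equitable, arborable $L'$-coloring $f$, with $L'$ the restriction of $L$ and equitability bound $\lceil n/k\rceil-m$. Let $D$ be the reduced list assignment of $G[S]$ as in Lemma~\ref{lem: general}. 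Because $f$ leaves $S$ uncolored, the only vertices of $S$ with $D(v)\ne L(v)$ are the first $p$ of the block; the $j$-th of these has exactly $p-j+1$ neighbors in $G-S$, so $|D(v_{n-mk+j})|\ge k-\lfloor(p-j+1)/2\rfloor$, these sizes grow along the block, the first is still at least $k-\lfloor p/2\rfloor\ge1$, and for $k=\lceil(p+1)/2\rceil$ it can be forced to equal $1$ for every choice of $f$, which is why $m=1$ does not suffice there.

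It remains to produce an arborable $D$-coloring $g$ of the near-clique $G[S]$ using each color at most $m$ times and satisfying condition (ii) of Lemma~\ref{lem: general}; then $h=f\cup g$ is the desired coloring of $G$, and the induction closes. Arborability forces each color at most twice on a near-clique, so for $m=1$ this is a rainbow coloring of $K_k$ from $D$ and for $m=2$ a coloring with each color at most twice. I would model it as a left-perfect $b$-matching between $S$ and the color set, giving color $c$ capacity $\min(m,2)$ but reserving one unit of that capacity to be reachable only from the vertices of $S$ with no $G-S$-neighbor $f$-colored $c$; a left-perfect matching in this auxiliary graph automatically satisfies (ii), and by the appropriate (degree-constrained) version of Hall's theorem it exists iff every $T\subseteq S$ has total reachable capacity at least $|T|$. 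Verifying this relies on three facts: only the first $p$ vertices of the block have small lists and these grow; for each $c$, the set of vertices of $S$ for which $c$ is dangerous is an interval among those first $p$ (because every vertex of $S$ sees a suffix of $V(G-S)$, and the clique on the last $p$ vertices of $G-S$ carries each color at most twice under $f$); and the full-list vertices of $S$ contribute the capacity needed when $|T|$ is large. I expect this Hall-type verification --- especially the interaction at the left end of the block between the smallest lists and the dangerous intervals --- to be the crux, and I would be ready to absorb part of it into the induction by strengthening the inductive statement to yield an $f$ whose color classes are additionally spread out over the last $O(p)$ vertices of $G-S$, feasible precisely because that boundary clique has at most $k$ vertices when $k\ge\lceil(p+1)/2\rceil$. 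The reformulation that an $L$-coloring of $P_n^p$ is arborable iff every window of $p+1$ consecutive vertices gets at most two vertices of each color class makes this boundary obstruction explicit and may point to a cleaner route.
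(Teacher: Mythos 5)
The statement you are proving is Conjecture~\ref{conj: pathpower}, which the paper poses as an \emph{open problem}: the authors prove it only for $p \in [4]$ (via Theorem~\ref{pro: pathpower} and Proposition~\ref{pro: p-1}) and for the necessity direction (via the $K_{p+1}$-subgraph observation preceding the conjecture), so there is no proof in the paper to compare against. Your proposal reproduces exactly those known parts correctly --- the triangle-in-a-color-class argument for $k < \lceil (p+1)/2\rceil$, and the reduction of sufficiency to the cases $p \ge 5$, $\lceil (p+1)/2\rceil \le k \le p-2$ --- but it does not close the remaining cases, so it is not a proof of the conjecture.

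The gap is the one you yourself flag as ``the crux.'' After deleting the last $mk$ vertices and invoking Lemma~\ref{lem: general}, you must produce an arborable $D$-coloring of the boundary near-clique $G[S]$ using each color at most $m$ times and satisfying condition~(ii), and the only quantitative control you have on the reduced lists is Observation~\ref{obs: obvious}: $|D(v)| \ge k - \lfloor t/2 \rfloor$ where $t$ is the number of $G-S$-neighbors. For $m=1$ and $k=p-2$ this gives the $j$-th vertex of the block a list of size only $k - \lfloor (p-j+1)/2\rfloor$, and the standard sufficient condition for a system of distinct representatives (sorted $i$-th smallest list size at least $i$) already fails at the right end of the block; for $m=2$ and $k=\lceil(p+1)/2\rceil$ the leftmost vertex of the block may have $|D(v)|=1$ and no safe colors at all, so condition~(ii) cannot be handled vertex-by-vertex. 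Verifying Hall's condition therefore requires structural information about how $f$ distributes colors over the last $p$ vertices of $G-S$, which is precisely the strengthened inductive hypothesis you say you ``would be ready'' to formulate but do not. Until that strengthening is stated and shown to propagate through the induction (including through the base cases $K_n$ and $K_{p+2}-e$, where you would also need to verify it), the argument for $p \ge 5$ is a plausible plan rather than a proof, and the conjecture remains open.
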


Notice that Theorem~\ref{pro: pathpower} and Proposition~\ref{pro: p-1} tell us that Conjecture~\ref{conj: pathpower} is true for $p \in [4]$.  We end Section~\ref{general} by using Lemma~\ref{lem: general} to prove Conjecture~\ref{conj: HSanalogue} for powers of cycles.

\begin{thm} \label{pro: cyclepower}
Suppose that $p \geq 2$ and $n \geq 2p+2$.  If $G = C_{n}^p$, then $G$ is equitably $k$-list arborable for each $k$ satisfying $k \geq p+1$.
\end{thm}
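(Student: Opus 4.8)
The plan is to apply Lemma~\ref{lem: general} with $m=1$, taking $S$ to be a block of $k$ consecutive vertices of the cycle.

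First I would dispose of small $n$: if $n\le 2k$ then $k\ge\lceil n/2\rceil$, and $C_n^p$, being an $n$-vertex graph, is equitably $k$-list arborable (as noted just after Conjecture~\ref{conj: HSanalogue}). So assume $n\ge 2k+1$; together with $k\ge p+1$ this forces $n\ge 2p+3$, so no edge of $C_n^p$ wraps around a block of $k$ consecutive vertices. Fix a $k$-assignment $L$, write $V(C_n^p)=\{v_1,\dots,v_n\}$ in cyclic order, and set $S=\{v_1,\dots,v_k\}$. Then $G-S$ is exactly $P_{n-k}^p$, realized on the path $v_{k+1},\dots,v_n$. By Theorem~\ref{pro: pathpower} (valid since $k\ge p$), $G-S$ has an equitable, arborable $L'$-coloring $f$, where $L'$ is the restriction of $L$; its color classes have size at most $\lceil (n-k)/k\rceil=\lceil n/k\rceil-1$.

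Now let $D$ be the list assignment of $G[S]$ defined in Lemma~\ref{lem: general}. It suffices to produce an arborable $D$-coloring $g$ of $G[S]$ that uses each color at most once: such a $g$ is injective, hence automatically a proper (so arborable) $D$-coloring, condition~(ii) of Lemma~\ref{lem: general} then holds vacuously, and the resulting $h$ is an equitable (classes of size at most $\lceil n/k\rceil$), arborable $L$-coloring of $G$. Since $G[S]=P_k^p$ has exactly $k$ vertices, the existence of such a $g$ amounts to the family $(D(v))_{v\in S}$ admitting a system of distinct representatives, which by Hall's theorem reduces to checking $\bigl|\bigcup_{v\in T}D(v)\bigr|\ge|T|$ for every $T\subseteq S$. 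To control $D$, note that for $v_j\in S$ the neighbors of $v_j$ lying in $G-S$ form an initial segment of the path $P_{n-k}^p$ (the ``forward'' side, nonempty iff $j+p>k$) together with a terminal segment (the ``backward'' side, nonempty iff $j\le p$); call $v_j$ an \emph{interface} vertex when both sides are nonempty, which happens exactly for $j\in\{k-p+1,\dots,p\}$ ($2p-k$ vertices, none if $k\ge 2p$). Provided $f$ is chosen so that its restriction to the first $p$ and to the last $p$ vertices of the path is rainbow, the only colors removed from $L(v_j)$ are those used on both the forward and backward sides of $v_j$, whence the removed set $B_j$ satisfies $|B_j|\le\min(j+p-k,\ p-j+1)$ for interface $v_j$ and $B_j=\varnothing$ for every non-interface $v_j$. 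Consequently, if $T$ contains a non-interface vertex then $\bigl|\bigcup_{v\in T}D(v)\bigr|=k\ge|T|$; and if $T$ consists only of interface vertices then $\bigcup_{v\in T}D(v)$ contains both $L(v_{\min T})\setminus B_{\min T}$ and $L(v_{\max T})\setminus B_{\max T}$, so
\[
\Bigl|\bigcup_{v\in T}D(v)\Bigr|\ \ge\ k-\min\bigl(|B_{\min T}|,\ |B_{\max T}|\bigr)\ \ge\ k-\min\bigl((\min T)+p-k,\ p-\max T+1\bigr),
\]
and since $\max T-\min T\ge|T|-1$ and $k\ge p+1$, a short computation yields $\bigl|\bigcup_{v\in T}D(v)\bigr|\ge|T|$. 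Hence Hall's condition holds, the desired $g$ exists, and Lemma~\ref{lem: general} completes the argument.

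The main obstacle is securing a coloring $f$ of $P_{n-k}^p$ that is rainbow at the two ends: Theorem~\ref{pro: pathpower} only provides an arborable coloring, whose color classes are forests and so may repeat a color even within the two end cliques $K_{p+1}$ of $P_{n-k}^p$, which would inflate the sets $B_j$ and destroy the Hall bound above. I would resolve this by re-running the proof of Theorem~\ref{pro: pathpower} with the extra requirement that the coloring be proper on the first $p+1$ and last $p+1$ path vertices (there is room since $k\ge p+1$). A secondary wrinkle is the regime where $P_{n-k}^p$ is short, so that its two end cliques overlap and ``rainbow ends'' is essentially as strong as properness, which can fail for disjoint lists; here I expect to apply Lemma~\ref{lem: general} with a larger value of $m$ instead, making $G-S$ so small that $f$ repeats no color on any neighborhood (hence $D=L$ and nothing is removed), at the price of verifying condition~(ii) of Lemma~\ref{lem: general} for the few colors $f$ actually uses. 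The remaining steps — the reduction to $n\ge 2k+1$, the identification of $G-S$ with a path power, and the Hall/SDR computation — are routine.
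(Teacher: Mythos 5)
Your reduction to $n \ge 2k+1$, the identification of $G-S$ with $P_{n-k}^p$, and the Hall/SDR computation are all fine \emph{conditional on} the hypothesis you flag yourself: that the equitable, arborable $L'$-coloring $f$ of $P_{n-k}^p$ can be chosen rainbow on the first $p$ and last $p$ path vertices. That hypothesis carries the entire weight of the argument and is not established. Every estimate in your Hall verification depends on it --- even the claim that $D(v)=L(v)$ for non-interface vertices fails without it, since a vertex whose $p$ out-neighbors all lie on one side of the cut can still lose up to $\lfloor p/2\rfloor$ colors if $f$ repeats colors there, and Observation~\ref{obs: obvious} alone then gives only $|D(v_1)|, |D(v_k)| \ge k-\lfloor p/2\rfloor$, which does not suffice to check Hall when $k\le 2p-1$ and the lists coincide. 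Theorem~\ref{pro: pathpower} provides no control over where colors repeat: its proof extends a coloring of $P_{n-2k}^p$ across the first $k$ path vertices via Lemma~\ref{lem: pelsmajerlike}, which is quoted from~\cite{Z16} as a black box and may well assign equal colors to adjacent vertices in the end cliques. ``Re-running the proof with the extra requirement'' is therefore not a routine patch --- it requires strengthening the induction hypothesis of Theorem~\ref{pro: pathpower} and replacing or reproving Lemma~\ref{lem: pelsmajerlike} --- and the short-path regime is likewise only gestured at. As written, the proposal has a genuine gap at its central step.

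The paper's proof closes exactly this gap by taking $m=2$, i.e., $S=\{v_1,\dots,v_{2k}\}$, rather than $m=1$. With the doubled buffer, for each $i\in[k]$ the vertex $v_i$ has at most $p+1-i$ neighbors outside $S$ (all on the backward side) and $v_{k+i}$ has at most $p+i-k$ (all on the forward side), so $|D_s(v_i)|\ge i$ and $|D_s(v_{k+i})|\ge k-i+1$ with \emph{no} assumption on $f$ whatsoever. One then greedily assigns $k$ pairwise distinct safe colors to each half of $S$; every color is used at most twice and is never dangerous, so Lemma~\ref{lem: general} applies directly. The point you are missing is that enlarging $S$ lets the two halves of $S$ absorb the boundary effects themselves, eliminating any need to control the behavior of $f$ near the cut. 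To salvage $m=1$ you would first have to prove the strengthened, rainbow-ended version of Theorem~\ref{pro: pathpower}; until then the argument is incomplete.
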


We were not able to improve the bound of $p+1$ to $p$ in Theorem~\ref{pro: cyclepower} which means that Conjecture~\ref{conj: stronger} is still open for powers of cycles.

\section{Some Interesting Examples 
} \label{lack}

We will use the following trivial proposition throughout this section.

\begin{pro} \label{pro: obvious}
Suppose $G$ is a complete bipartite graph with bipartition $X, Y$, and $L$ is a list assignment for $G$.  A mapping $f$ is an arborable $L$-coloring of $G$ if and only if $f$ is an $L$-coloring of $G$ such that for each $c$ in the range of $f$, $|f^{-1}(c) \cap X| \leq 1$ or $|f^{-1}(c) \cap Y| \leq 1$.
\end{pro}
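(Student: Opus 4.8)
The plan is to reduce the statement to an elementary fact about which induced subgraphs of a complete bipartite graph are acyclic. By definition, a mapping $f$ is an arborable $L$-coloring of $G$ precisely when $f$ is an $L$-coloring of $G$ and, for every color $c$ in the range of $f$, the induced subgraph $G[f^{-1}(c)]$ is acyclic. So it suffices to establish the following claim: for any $S \subseteq V(G)$, the graph $G[S]$ is acyclic if and only if $|S \cap X| \leq 1$ or $|S \cap Y| \leq 1$. Applying this claim with $S = f^{-1}(c)$ for each $c$ in the range of $f$ immediately yields the proposition, since the condition on all color classes is then exactly the condition stated in the proposition.

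To prove the claim, first suppose $|S \cap X| \geq 2$ and $|S \cap Y| \geq 2$, and pick distinct $x_1, x_2 \in S \cap X$ and distinct $y_1, y_2 \in S \cap Y$. Since $G$ is complete bipartite, the edges $x_1y_1$, $y_1x_2$, $x_2y_2$, $y_2x_1$ all belong to $G$ and hence to $G[S]$, so $x_1 y_1 x_2 y_2 x_1$ is a $4$-cycle and $G[S]$ is not acyclic. Conversely, suppose $|S \cap X| \leq 1$ (the case $|S \cap Y| \leq 1$ being symmetric). If $S \cap X = \emptyset$ then $G[S]$ has no edges, since every edge of $G$ joins $X$ to $Y$; and if $S \cap X = \{x\}$ then every edge of $G[S]$ is incident to $x$, so $G[S]$ is a subgraph of a star and therefore a forest. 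In either case $G[S]$ is acyclic, which completes the proof of the claim and hence of the proposition.

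There is no genuine obstacle here; the only point to keep in mind is the defining property of complete bipartite graphs that no edge lies inside $X$ or inside $Y$, which is precisely what makes both directions of the claim work — it forces the presence of the $4$-cycle when both sides contribute at least two vertices, and it forces a star (hence forest) structure otherwise.
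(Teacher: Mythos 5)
Your proof is correct and follows essentially the same route as the paper: the paper's one-line argument rests on exactly the two facts you verify, namely that any cycle in a complete bipartite graph needs at least two vertices on each side, and that an induced subgraph with at least two vertices on each side contains a $4$-cycle. You have simply written out the details the paper leaves implicit.
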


\begin{proof}
The result immediately follows from the fact that any cycle contained in $G$ must have at least two vertices in each of the sets $X$, $Y$, and the fact that any subgraph of $G$ induced by a subset of $V(G)$ with at least two vertices in each of the sets $X$, $Y$ contains a cycle.
\end{proof}

It is well-known that equitable $k$-choosability does not imply equitable $k$-colorability (see e.g.,~\cite{M18} or~\cite{MM19}).  We now show that something similar is true in the context of vertex arboricity.

\begin{pro} \label{pro: noimply}
Let $G=K_{4,15}$.  Then, $G$ is not equitably vertex 3-arborable, and $G$ is equitably 3-list arborable.
\end{pro}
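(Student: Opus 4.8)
The plan is to treat the two claims separately, since they pull in opposite directions. For the first claim, that $G = K_{4,15}$ is not equitably vertex $3$-arborable, I would argue by a counting obstruction. An equitable vertex $3$-coloring of a $19$-vertex graph must have color classes of sizes $7,6,6$. By Proposition~\ref{pro: obvious}, a color class $f^{-1}(c)$ induces an acyclic subgraph of $K_{4,15}$ if and only if it meets the $4$-side in at most one vertex or meets the $15$-side in at most one vertex. So each of the three color classes is of one of two "types": it uses at most one vertex from the small side $X$ (call this type $A$), or at most one vertex from the large side $Y$ (type $B$). A type-$B$ class has at most $1 + 4 = 5$ vertices, which is too small to be one of our classes (all have size $\geq 6$); hence all three classes must be type $A$, each using at most one vertex of $X$. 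But then the three classes together use at most $3$ vertices of $X$, contradicting $|X| = 4$. This kills equitable vertex $3$-arborability.

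For the second claim, that $G$ is equitably $3$-list arborable, let $L$ be an arbitrary $3$-assignment; an equitable arborable $L$-coloring must use no color more than $\lceil 19/3 \rceil = 7$ times. Write $X = \{x_1,x_2,x_3,x_4\}$ and let $Y$ be the $15$-set. The idea is: first color $X$ greedily and then color $Y$ one color class at a time, using Proposition~\ref{pro: obvious} to control acyclicity. Concretely, I would first choose colors $f(x_1),\dots,f(x_4) \in L(x_i)$ — this is trivially possible, and I'd like to choose them to avoid creating too much "conflict," ideally making the multiset $\{f(x_1),\dots,f(x_4)\}$ spread out (at most two $x_i$'s sharing a color, say, which is always achievable since each $x_i$ has $3$ choices). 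Say the set of colors used on $X$ is $C_X$, with $|C_X| \in \{2,3,4\}$. Now I color $Y$: for each $y \in Y$, if $L(y)$ contains a color not in $C_X$, I am free to give $y$ that color — such a class will meet $X$ in $0$ vertices, hence be type $A$ and automatically acyclic; the only remaining constraint is the size cap of $7$. The troublesome vertices are those $y$ with $L(y) \subseteq C_X$, which forces $y$ to reuse a color already on $X$; for such a $y$, giving it color $c \in C_X$ is safe for acyclicity as long as the class of $c$ still has at most one vertex in $X$ — i.e., as long as $c$ was used exactly once on $X$ — or as long as $y$ is the only $Y$-vertex in that class. So I would handle these "bad" $y$'s carefully: distribute them among the colors of $C_X$ respecting both the arboricity rule (a color used twice on $X$ can absorb at most one $Y$-vertex) and the global size cap of $7$.

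The main obstacle is the bookkeeping in this last step: I must verify that the number of "bad" vertices $y$ (those with $L(y) \subseteq C_X$) can always be accommodated. In the worst case $|C_X| = 2$, say $C_X = \{1,2\}$ with each used twice on $X$; then any bad $y$ has $L(y) = \{1,2\}$, and each of colors $1,2$ can take at most one such $y$ (to keep its $Y$-side $\le 1$), so at most $2$ bad vertices can be placed — but there could be up to $15$ such $y$. This shows the naive plan fails and the choice of coloring on $X$ matters: I should instead aim to make $|C_X|$ as large as possible, $|C_X| = 4$, with every color used exactly once on $X$, whenever the lists allow it; then every color of $C_X$ can still absorb many $Y$-vertices (its $X$-side is $1$), and the only cap is size $7$. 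The real work is a careful case analysis on how much freedom the adversary's lists on $X$ leave — e.g., if the four lists $L(x_i)$ force large overlaps — combined with a Hall-type / flow argument distributing the (at most $15$) vertices of $Y$ among available colors subject to the per-color cap of $7$ and the arboricity restrictions. Since $15 = 7 + 7 + 1 \le 7 \cdot 3$ and one has at least the colors of $C_X$ plus, for each non-bad $y$, a fresh escape color, a counting check should show $Y$ can always be split into classes of size $\le 7$ each obeying Proposition~\ref{pro: obvious}; pinning down the extremal list configurations on $X$ is where the care is needed.
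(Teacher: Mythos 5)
Your argument for the first claim is correct and is essentially the paper's argument in different clothing: both reduce to the observation (Proposition~\ref{pro: obvious}) that an acyclic color class of $K_{4,15}$ meets one side in at most one vertex, and both derive a contradiction from the sizes $7,6,6$ of the classes together with $|X|=4$. That half needs no changes.

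The second claim, however, is not proved; you say yourself that ``the real work'' of distributing $Y$ remains, and the strategy you propose for that work --- color $X$ first and make the colors on $X$ as spread out as possible --- actually fails. Take $L(v)=\{1,2,3\}$ for every vertex. Any coloring of $X$ uses at most three colors, and a color placed on two or three vertices of $X$ can, by Proposition~\ref{pro: obvious}, absorb at most one vertex of $Y$, while a color placed on exactly one vertex of $X$ can absorb at most $6$ more vertices before hitting the equity cap of $7$. The resulting $Y$-capacities are $1+6+6=13$ for the split $(2,1,1)$ on $X$, $1+1+7=9$ for $(2,2,0)$, and $1+6+7=14$ for $(3,1,0)$ --- all short of $15$. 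The only precoloring of $X$ that works here is the \emph{monochromatic} one (capacity $1+7+7=15$), which is the opposite of your heuristic. The missing idea is that when some color $c$ lies in many of the lists on $Y$ you must commit a large block of $Y$ to $c$ \emph{before} touching $X$; the paper's proof is organized around exactly this dichotomy (first the case where no color appears in at least $7$ of the $Y$-lists, where a greedy argument like yours does succeed, then the case where such a color exists, where seven $Y$-vertices are colored first and the analysis recurses on the remaining eight). Without that case split, the ``Hall-type / flow'' counting you allude to has no correct starting point, so the second half of the proposal has a genuine gap rather than a routine omission.
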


\begin{proof}
Throughout the proof suppose that the bipartition of $G$ is $A = \{a_1, a_2, a_3, a_4 \}$, $B = \{b_1, b_2, \ldots, b_{15} \}$.

First, we prove that $G$ is not equitably vertex 3-arborable.  For the sake of contradiction, suppose that $f: V(G) \rightarrow [3]$ is an arborable 3-coloring of $G$ which means $|f^{-1}(i)|$ is 6 or 7 for each $i \in [3]$.  Without loss of generality, suppose that $|f^{-1}(1) \cap A| \geq |f^{-1}(2) \cap A| \geq |f^{-1}(3) \cap A|$.  The Pigeonhole Principle and the fact that $|A| = 4$ implies $2 \leq |f^{-1}(1) \cap A| \leq 4$.  Finally, the fact that $|f^{-1}(1)| \geq 6$ implies that $2 \leq |f^{-1}(1) \cap B|$.  Thus, $G[f^{-1}(1)]$ contains a cycle which is a contradiction.

Now, we will prove that $G$ is equitably $3$-list arborable.  We note that $\lceil |V(G)|/3 \rceil = 7$, and we suppose that $L$ is an arbitrary 3-assignment for $G$.  We will now construct an equitable, arborable $L$-coloring of $G$.  Notice that if there is no color in $\bigcup_{v \in V(G)} L(v)$ that appears in at least 7 of the lists: $L(b_1), \ldots, L(b_{15})$, we can begin by greedily coloring the vertices $a_1, \ldots, a_4$ with colors from their respective lists so that no color is used more than twice and at most one color is used twice.  Then, we can greedily color the vertices $b_1, \ldots, b_{15}$ with colors from their respective lists so that the color used twice on the vertices in $A$ (if there is such a color) is not used to color any vertices in $B$.  The resulting coloring is clearly an arborable $L$-coloring of $G$ that uses no color more than 7 times.

So, we may suppose without loss of generality that there is a color $c$ such that $c \in L(b_i)$ for each $i \in [7]$.  Suppose we color each vertex in $\{b_i : i \in [7] \}$ with $c$.  Then, for each $v \in V(G) - \{b_i : i \in [7] \}$, let $L'(v) = L(v) - \{c \}$.  Notice that if there is a color $d$ in at least 7 of the lists: $L'(b_8), \ldots, L'(b_{15})$ we can complete an equitable, arborable $L$-coloring of $G$ as follows. Begin by coloring 7 of the vertices in $\{b_8, \ldots, b_{15} \}$ that have $d$ in their list with $d$, and then greedily color the 5 remaining uncolored vertices with a color in their respective lists so that none of these remaining 5 vertices get colored with $c$ or $d$.

So, we may suppose that no color in $\bigcup_{v \in V(G) - \{b_i : i \in [7] \}} L'(v)$ appears in at least 7 of the lists: $L'(b_8), \ldots, L'(b_{15})$.  Notice that if there is a $d \in L'(a_i)$ for each $i \in [4]$, we can color each vertex in $A$ with $d$.  Then, we can greedily color each of the vertices $b_8, \ldots, b_{15}$ with a color in their respective lists so that none of these remaining vertices get colored with $c$ or $d$. The resulting coloring is clearly an arborable (in fact proper) $L$-coloring of $G$ that uses no color more than 7 times.  So, we may suppose that $\bigcap_{i=1}^4 L'(a_i) = \emptyset$.  Since $\bigcap_{i=1}^4 L'(a_i) = \emptyset$, it is possible to color each $a_i \in A$ with a color from $L'(a_i)$ so that no color is used more than twice and at most one color is used twice in coloring the vertices in $A$. Then, we can greedily color the vertices $b_8, \ldots, b_{15}$ with colors from their respective lists so that $c$ and the color used twice on the vertices in $A$ (if there is such a color) is not used to color any vertex of these vertices. The resulting coloring is clearly an arborable $L$-coloring of $G$ that uses no color more than 7 times.
\end{proof}

We will now work towards showing that if graph $G$ is equitably $k$-list arborable, $G$ need not be equitably $(k+1)$-list arborable.  In particular, we will show that $K_{11,17}$ is equitably 3-list arborable, but it is not equitably 4-list arborable.  To do this we begin by proving two general propositions, and a lemma.

\begin{pro} \label{pro: prob}
Let $G = K_{n,m}$ and $k \in \N$.  If $n+m \leq (k+1)2^k - 1$, then $G$ is $k$-list arborable.
\end{pro}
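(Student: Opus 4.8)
The plan is to use Proposition~\ref{pro: obvious} to reduce the problem to finding a suitable two-part partition of the color set, and then to produce such a partition by a short probabilistic argument (Markov's inequality plus a union bound) whose arithmetic matches the bound $(k+1)2^{k}-1$ exactly. Fix a $k$-assignment $L$ for $G=K_{n,m}$ with bipartition $X,Y$, and let $U=\bigcup_{v\in V(G)}L(v)$. By Proposition~\ref{pro: obvious} it suffices to find an $L$-coloring $f$ so that for every color $c$ we have $|f^{-1}(c)\cap X|\le 1$ or $|f^{-1}(c)\cap Y|\le 1$. I will build such an $f$ from a partition $U=P_X\cup P_Y$ (disjoint) chosen so that at most $k$ vertices of $X$ have their whole list inside $P_Y$ and at most $k$ vertices of $Y$ have their whole list inside $P_X$.

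Given such a partition, color as follows. Every $x\in X$ with $L(x)\cap P_X\neq\emptyset$ receives an arbitrary color of $L(x)\cap P_X$, and symmetrically every $y\in Y$ with $L(y)\cap P_Y\neq\emptyset$ receives a color of $L(y)\cap P_Y$. The remaining vertices of $X$ are the at most $k$ vertices with $L(x)\subseteq P_Y$; since each such list has size $k$ and there are at most $k$ of them, Hall's condition holds trivially (any $j$ of these lists have union of size at least $k\ge j$), so they admit a system of distinct representatives, and we color each by its representative; we handle the remaining vertices of $Y$ the same way. Then a color of $P_X$ is used on $Y$ only by distinct representatives of leftover $Y$-vertices, hence at most once on $Y$, and symmetrically a color of $P_Y$ is used at most once on $X$; the freely colored $X$-vertices use only $P_X$-colors and the freely colored $Y$-vertices use only $P_Y$-colors, so nothing interferes. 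Thus every color class meets $X$ at most once or meets $Y$ at most once, and Proposition~\ref{pro: obvious} gives that $f$ is an arborable $L$-coloring of $G$.

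To produce the partition, put each color of $U$ into $P_X$ or into $P_Y$ independently and uniformly at random. Let $N_X$ be the number of $x\in X$ with $L(x)\subseteq P_Y$ and $N_Y$ the number of $y\in Y$ with $L(y)\subseteq P_X$. Since $|L(v)|=k$ for every $v$, linearity of expectation gives $\E[N_X]=n2^{-k}$ and $\E[N_Y]=m2^{-k}$. By Markov's inequality, $\Pr[N_X\ge k+1]\le n2^{-k}/(k+1)$ and $\Pr[N_Y\ge k+1]\le m2^{-k}/(k+1)$, so by the union bound the probability that $N_X\ge k+1$ or $N_Y\ge k+1$ is at most $(n+m)2^{-k}/(k+1)$, which is strictly less than $1$ precisely because $n+m\le (k+1)2^{k}-1<(k+1)2^{k}$. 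Hence some outcome has $N_X\le k$ and $N_Y\le k$, giving the desired partition.

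The only substantive idea is the reduction in the second paragraph: demanding that no list lie entirely on the ``wrong'' side would, via a union bound, only give $n+m<2^{k}$, whereas allowing up to $k$ such lists on each side and absorbing them with Hall's theorem lets one instead bound the \emph{count} of offending vertices by Markov's inequality, which is where the extra factor of $k+1$ (and hence the precise bound $(k+1)2^{k}-1$) comes from. I expect the only place needing care to be the routine bookkeeping in the second paragraph, checking that the distinct-representative colorings on the two sides do not conflict with each other or with the freely colored vertices.
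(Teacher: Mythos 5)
Your proof is correct and follows essentially the same route as the paper: both randomly partition the color universe into two halves with fair coin flips, use a first-moment argument to show that with positive probability few vertices have their entire list on the ``wrong'' side, then patch those leftover vertices with pairwise distinct colors and invoke Proposition~\ref{pro: obvious}. The only (immaterial) difference is in the bookkeeping: the paper bounds the \emph{total} number of uncolored vertices by $k$ (an integer-valued variable with mean less than $k+1$ must sometimes be at most $k$) and colors them greedily with distinct colors, whereas you bound each side separately by $k$ via Markov plus a union bound and finish with a system of distinct representatives on each side; both are valid.
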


\begin{proof}
Suppose that $L$ is an arbitrary $k$-assignment for $G$, and suppose $G$ has bipartition $X$, $Y$ with $|X|=n$ and $|Y|=m$.  Suppose we construct the sets $C_X$ and $C_Y$ via the following random process.  For each $c \in \bigcup_{v \in V(G)} L(v)$ flip a fair coin.  If the coin lands heads place $c$ in $C_X$; otherwise, place $c$ in $C_Y$.  After this process has concluded, for each $v \in X$ if $L(v) \cap C_X \neq \emptyset$, color $v$ with an element in $L(v) \cap C_X$.  Similarly, for each $v \in Y$ if $L(v) \cap C_Y \neq \emptyset$, color $v$ with an element in $L(v) \cap C_Y$.  Clearly, the resulting (perhaps partial) $L$-coloring is proper.

Now, for each $v \in V(G)$, let $X_v$ be the random variable that is equal to 1 if $v$ is uncolored and equal to 0 if $v$ is colored.  So, $\sum_{v \in V(G)} X_v$ is the number of vertices in $G$ that are uncolored.  Clearly, $P[X_v = 1] = (1/2)^k$.  So, by linearity of expectation,
$$\E \left[ \sum_{v \in V(G)} X_v \right] = \frac{n+m}{2^k}.$$
Since $n+m \leq (k+1)2^k - 1$, we see that $\lfloor \E[ \sum_{v \in V(G)} X_v] \rfloor \leq k$.

Since $\sum_{v \in V(G)} X_v$ is always an integer, there is a partial, proper $L$-coloring of $G$, $f$, that leaves at most $k$ vertices uncolored.  We can extend $f$ to an arborable $L$-coloring of $G$ by coloring the vertices outside of the domain of $f$ with pairwise distinct colors from their respective lists (this is possible since there are at most $k$ uncolored vertices and each list contains $k$ colors).  Then, our resulting $L$-coloring is an arborable $L$-coloring of $G$ by Proposition~\ref{pro: obvious}.
\end{proof}

\begin{pro} \label{pro: twocolors}
Let $G$ be a complete bipartite graph with bipartition $X$, $Y$, and suppose $L$ is a 2-assignment for $G$ such that there is no arborable $L$-coloring of $G$.  Suppose there exists a partial $L$-coloring $f: X \rightarrow \bigcup_{v \in X} L(v)$ satisfying the following two conditions: (1) there are two distinct colors $a$ and $b$ satisfying $|f^{-1}(a)| \geq 2$ and $|f^{-1}(b)| \geq 2$ and (2) for each $c \in  \bigcup_{v \in X} L(v) - \{a,b\}$, $|f^{-1}(c)| \leq 1$.  Then, $|L^{-1}(\{a,b\}) \cap Y| \geq 3$.
\end{pro}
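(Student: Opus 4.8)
The plan is to argue by contraposition: I will assume that $L^{-1}(\{a,b\}) \cap Y$ --- the set of vertices $v \in Y$ whose (size-two) list is exactly $\{a,b\}$ --- has at most two elements, and from this produce an arborable $L$-coloring of $G$, contradicting the standing hypothesis that no arborable $L$-coloring exists. The whole argument will run through Proposition~\ref{pro: obvious}, which says that for a complete bipartite graph an $L$-coloring is arborable precisely when each color is used at most once on $X$ or at most once on $Y$; properness plays no role.

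The first step is to read off the structure of $f$. Since the domain of $f$ is all of $X$ and $X$ is independent, $f$ is a legitimate partial $L$-coloring of $G$, and conditions (1) and (2) say exactly that the only colors appearing two or more times on the $X$-side are $a$ and $b$. The second step is to extend $f$ to $Y$. Each $v \in Y$ with $L(v) \neq \{a,b\}$ has a color in $L(v) \setminus \{a,b\}$ (because $|L(v)| = 2$), and I color $v$ with such a color. For the at most two vertices $v \in Y$ with $L(v) = \{a,b\}$, I color one of them $a$ and, if a second one exists, the other $b$; here I use that $a$ and $b$ are distinct. Call the resulting total coloring $g$ (so $g|_X = f$).

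The third step is the verification. For any color $c \notin \{a,b\}$, condition (2) gives $|g^{-1}(c) \cap X| = |f^{-1}(c)| \leq 1$. For $c \in \{a,b\}$, the construction forces $|g^{-1}(c) \cap Y| \leq 1$: the only vertices of $Y$ that could receive $a$ or $b$ are the (at most two) vertices with list $\{a,b\}$, and $a$ was placed on at most one of them and $b$ on at most one. Thus every color is used at most once on one of the two sides, so Proposition~\ref{pro: obvious} says $g$ is an arborable $L$-coloring of $G$, which is the desired contradiction.

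I do not anticipate a genuine obstacle; the content is a short case construction. The two points that require care are (i) confirming that $f$ is defined on \emph{all} of $X$, so that no uncolored $X$-vertex has to be filled in later (which could create an uncontrolled third heavy color on the $X$-side), and (ii) noticing that condition (2) is precisely the hypothesis that lets every color other than $a$ and $b$ be reused arbitrarily often on $Y$ without harm. If one instead reads $L^{-1}(\{a,b\}) \cap Y$ as $\{v \in Y : L(v) \cap \{a,b\} \neq \emptyset\}$, the very same construction still goes through --- the remaining vertices of $Y$ then avoid $\{a,b\}$ automatically --- so the conclusion is insensitive to that ambiguity.
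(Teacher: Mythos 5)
Your proposal is correct and follows essentially the same route as the paper's proof: assume $|L^{-1}(\{a,b\}) \cap Y| \leq 2$, extend $f$ by coloring each remaining vertex of $Y$ with a color outside $\{a,b\}$ and the at most two exceptional vertices with distinct colors, then invoke Proposition~\ref{pro: obvious}. Your extra remarks on the interpretation of $L^{-1}(\{a,b\})$ and on why condition (2) suffices are sound but not needed.
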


\begin{proof}
For the sake of contradiction suppose that $|L^{-1}(\{a,b\}) \cap Y| \leq 2$.  Now, we obtain a contradiction by extending $f$ to an arborable $L$-coloring of $G$ as follows.  For each $v \in Y$ such that $v \notin L^{-1}(\{a,b\})$, color $v$ with a color in $L(v) - \{a,b\}$.  Then, color the vertices in $L^{-1}(\{a,b\}) \cap Y$ with distinct colors (if there are any such vertices).  Our resulting $L$-coloring is an arborable $L$-coloring of $G$ by Proposition~\ref{pro: obvious}.
\end{proof}

\begin{lem} \label{lem: K711}
Let $G = K_{7,11}$ and $L$ be a 2-assignment for $G$.  Then, there is an arborable $L$-coloring of $G$ that uses no color more than 10 times.
\end{lem}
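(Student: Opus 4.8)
The plan is to use Proposition~\ref{pro: obvious} to reduce the lemma to a combinatorial statement about the two parts $X,Y$ of $G$, where $|X|=7$ and $|Y|=11$: it suffices to find an $L$-coloring $f$ of $G$ (not necessarily proper) such that (i) for every color $c$ one has $|f^{-1}(c)\cap X|\le 1$ or $|f^{-1}(c)\cap Y|\le 1$, and (ii) no color is used more than $10$ times. The fact I will use repeatedly is that, under (i), a color $c$ with $|f^{-1}(c)|\ge 11$ must satisfy $|f^{-1}(c)\cap Y|\ge 2$ (otherwise $|f^{-1}(c)|\le|X|+1=8$), hence $|f^{-1}(c)\cap X|\le 1$ and $|f^{-1}(c)\cap Y|\ge 10$.

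First I would split on whether some color $c$ belongs to at least $10$ of the lists $L(y)$, $y\in Y$. If so, say $c\in L(y_1),\dots,L(y_{10})$: color $y_1,\dots,y_{10}$ with $c$, color $y_{11}$ with a color in $L(y_{11})\setminus\{c\}$, and color each $x\in X$ with a color in $L(x)\setminus\{c\}$ (these sets are nonempty since the lists have size $2$). Then $f^{-1}(c)\subseteq Y$ has size exactly $10$, and every other color is used on at most one vertex of $Y$ (only possibly $y_{11}$), hence on at most $|X|+1=8$ vertices in total; so $f$ satisfies (i) and (ii).

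Now suppose no color belongs to $10$ or more of the lists $L(y)$, so every color appears on at most $9$ vertices of $Y$. Then (ii) holds automatically for any $f$ satisfying (i): if $|f^{-1}(c)\cap Y|\le 1$ then $|f^{-1}(c)|\le 8$, while if $|f^{-1}(c)\cap Y|\ge 2$ then $|f^{-1}(c)\cap X|\le 1$ by (i) and $|f^{-1}(c)\cap Y|\le 9$, so $|f^{-1}(c)|\le 10$. Hence it now suffices to produce any arborable $L$-coloring of $G$. If the lists on $X$ admit a system of distinct representatives, color $X$ accordingly and color $Y$ arbitrarily; if the lists on $Y$ admit one, do the symmetric thing; either way some part has every color at most once, so (i) holds. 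Otherwise I would color $Y$ so as to control the set $H$ of colors that appear at least twice on $Y$ --- the absence of a system of distinct representatives for $Y$ forces $H\neq\emptyset$, but there is slack in which colors of an obstructing part of the list structure must lie in $H$ --- and then extend the coloring to $X$ so that each color of $H$ is used at most once on $X$; the extension exists exactly when the $X$-vertices whose list lies inside $H$ admit a system of distinct representatives, and a counting argument exploiting $|X|=7$ shows $Y$ can be colored so that this holds.

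The step I expect to be the main obstacle is this last one: when neither part admits a system of distinct representatives, one must choose a coloring of $Y$ and a compatible extension to $X$ simultaneously, and showing this is always possible becomes a finite case analysis organized by the obstructing configurations of the list multigraphs of $X$ and $Y$ (how many vertices of a part share a common two-element list, which colors are forced to repeat, and so on). It is the smallness of $|X|=7$ --- together with $|Y|=11$ and the assumption that no color lies in $10$ of the $Y$-lists --- that makes this analysis manageable.
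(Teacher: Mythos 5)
Your opening moves are sound and essentially match the paper's: the case where some color lies in at least $10$ of the lists on $Y$ is handled identically, and your observation that once $m_Y\le 9$ \emph{any} arborable $L$-coloring automatically uses no color more than $10$ times (via Proposition~\ref{pro: obvious}) is a clean --- in fact slightly cleaner --- version of the paper's ``Observation.'' But this reduction only converts the lemma into the statement that $K_{7,11}$ admits an arborable $L$-coloring for every such $2$-assignment, and that existence statement is the actual content of the lemma: note that Proposition~\ref{pro: prob} only covers $K_{n,m}$ with $n+m\le 11$ when $k=2$, while here $n+m=18$, so nothing already established hands you an arborable coloring. Everything after ``if the lists on $X$ (or $Y$) admit a system of distinct representatives'' is the hard part, and you have not proved it; you explicitly defer it to ``a finite case analysis organized by the obstructing configurations'' together with an unspecified ``counting argument exploiting $|X|=7$.'' That is a genuine gap, not a routine verification: one must rule out list assignments for which \emph{every} coloring of $Y$ produces a repeated-color set $H$ whose trace on $X$ violates Hall's condition, and it is not evident that a simple count does this.

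For comparison, the paper spends the bulk of its proof on exactly this step, organized differently: it colors $X$ first, shows it suffices to color $X$ with at most one color repeated, and then splits on $m_X=\max_c|\{x\in X: c\in L(x)\}|\in[7]$. The cases $m_X\ge 5$ are immediate, $m_X\le 2$ (and part of $m_X=3$) follow from a first-moment argument showing the expected number of colors repeated on $X$ under a uniformly random $L$-coloring is at most $7/4<2$, and the residual cases $m_X\in\{3,4\}$ require a structural analysis in which Proposition~\ref{pro: twocolors} is applied to four carefully chosen partial colorings of $X$ to force $|Y|\ge 12$, a contradiction. The fact that the paper needs to bring in $Y$'s list structure (via Proposition~\ref{pro: twocolors}) even after fixing the configuration on $X$ indicates that the two sides interact nontrivially in the extremal configurations, so your deferred case analysis would likely be comparable in length to the paper's; as written, your proposal establishes only the easy outer shell of the lemma.
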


\begin{proof}
Suppose the bipartition of $G$ is $X= \{x_1, \ldots, x_7\}$, $Y = \{y_1, \ldots, y_{11} \}$.  For the sake of contradiction, suppose that $L$ is a 2-assignment for $G$ such that there is no arborable $L$-coloring of $G$ that uses no color more than 10 times.  For each $A \in \{X, Y\}$, let $\eta_A : \bigcup_{v \in A}L(v) \rightarrow \N$ be the function given by $\eta_A(c) = |\{v \in A: c \in L(v) \}|$.  Additionally, for each $A \in \{X, Y\}$, let
$$m_A = \max_{c \in \bigcup_{v \in A} L(v)} \eta_A(c).$$
Note that if $m_Y \geq 10$, we can assume without loss of generality that there is a color $c \in L(y_i)$ for each $i \in [10]$.  We can complete an arobrable $L$-coloring of $G$ that uses no color more than 10 times by coloring $y_1, \ldots, y_{10}$ with $c$ and then coloring each $v \in V(G) - \{y_i: i\in [10]\}$ with a color in $L(v) - \{c\}$.  So, we know that $m_Y \leq 9$, and the following observation is clear.
\\
\\
\emph{Observation:}  If it is possible to color the vertices in $X$ with colors from their respective lists in such a way that at most one color, $c$, is used more than once, then by Proposition~\ref{pro: obvious} we can obtain an arborable $L$-coloring of $G$ by coloring each $v \in Y$ with a color in $L(v) - \{c\}$.  This coloring cannot use a color more than 10 times since $m_Y \leq 9$.  So, it is impossible to color the vertices in $X$ with colors from their respective lists in such a way that at most one color is used more than once.  So, at least two colors must be used at least two times in coloring the vertices in $X$.
\\
\\
\indent Now, we know that $m_X$ must equal some element in $[7]$.  We will now obtain a contradiction in each of these 7 cases.  We now pursue each of the seven cases in increasing order of difficulty.

In the case where $m_X \geq 6$, we can assume without loss of generality that there is a color $c \in L(x_i)$ for each $i \in [6]$.  We can color each of $x_1, \ldots, x_{6}$ with $c$ and then color $x_7$ with a color in $L(x_7) - \{c\}$.  This contradicts our Observation.

In the case where $m_X = 5$, we can assume without loss of generality that there is a color $c \in L(x_i)$ for each $i \in [5]$.  We also know that $c \notin L(x_i)$ when $i = 6, 7$.  So, we can color $x_1, \ldots, x_5$ with $c$.  Then, we can color $x_6, x_7$ with distinct colors from $L(x_6)$ and $L(x_7)$.  This contradicts our Observation.

In the case where $m_X \leq 2$, suppose we independently and randomly color each vertex $v \in X$ with a color from $L(v)$ such that each color from $L(v)$ has an equal chance of being chosen.  For each $c \in \bigcup_{v \in X} L(v)$, let $X_c$ be the random variable that is equal to 1 if the color $c$ is used twice in coloring the vertices of $X$ and equal to 0 otherwise.  Notice that when $\eta_X(c) = 2$, we have that $P[X_c = 1] = 1/4$, and when $\eta_X(c) = 1$, we have that $P[X_c = 1]=0$.  Since $\sum_{c \in \bigcup_{v \in X} L(v)} \eta_X(c) = 14$, there are at most seven elements in $\bigcup_{v \in X} L(v)$ that appear in two of the lists: $L(x_1), \ldots, L(x_7)$.  Consequently, $\E \left [ \sum_{c \in \bigcup_{v \in X} L(v)} X_c \right ] \leq \frac{7}{4}.$  Thus, there is a way to color the vertices in $X$ with colors from their respective lists such that at most one color is used more than once.  This contradicts our Observation.

Now, suppose that $m_X=4$.  We can assume without loss of generality that there is a color $c \in L(x_i)$ for each $i \in [4]$.  Now, we claim it must be the case that $L(x_5)=L(x_6)=L(x_7)$.  To see why, notice that if this was not the case, we could color each of the vertices: $x_1, \ldots, x_4$ with $c$, and we could color $x_5, x_6, x_7$ with pairwise distinct colors from $L(x_5)$, $L(x_6)$, $L(x_7)$ respectively which contradicts our Observation.  So, we may assume that $L(x_5)=L(x_6)=L(x_7)= \{c_1, c_2 \}$ where $c, c_1, c_2$ are pairwise distinct.  We will now consider two sub-cases: (1) $|\bigcup_{i=1}^4 L(x_i)| \geq 4$ and (2) $|\bigcup_{i=1}^4 L(x_i)| \leq 3$.

In sub-case (1) we can color $x_1, \ldots, x_4$ with pairwise distinct colors from $L(x_1)$, $L(x_2)$, $L(x_3)$, $L(x_4)$ respectively.  Then, we can color each of $x_5, x_6, x_7$ with $c_1$ which contradicts our Observation.

In sub-case (2) we can assume without loss of generality that $L(x_i) = \{c,c_3\}$ for each $i \in [2]$.  Clearly, $c, c_1, c_2, c_3$ are pairwise different (since each of them appears in at most 4 of the lists: $L(x_1), \ldots, L(x_7)$).  Also, we know that if $\eta_X(c_3) \geq 3$, then we may assume $L(x_3) = \{c,c_3 \}$; otherwise, we know $L(x_3) \neq L(x_2)$ and $L(x_3)=L(x_4)$.  It is not hard to see that it is possible to $L$-color the vertices in $X$ in the following four different ways: (1) $c$ is used 4 times and $c_1$ is used 3 times, (2) $c$ is used 4 times and $c_2$ is used 3 times, (3) $c_3$ is used 2 or 3 times, $c_1$ is used 3 times, and no other color is used more than once, and (4) $c_3$ is used 2 or 3 times, $c_2$ is used 3 times, and no other color is used more than once.  We know that none of these four partial $L$-colorings is extendable to an arborable $L$-coloring of $G$.  By Proposition~\ref{pro: twocolors} we have $|L^{-1}(\{c,c_1\}) \cap Y| \geq 3$, $|L^{-1}(\{c,c_2\}) \cap Y| \geq 3$, $|L^{-1}(\{c_3,c_1\}) \cap Y| \geq 3$, and $|L^{-1}(\{c_3,c_2\}) \cap Y| \geq 3$.  This implies $|Y| \geq 12$ which is a contradiction.

Finally, we turn our attention to the case where $m_X = 3$.  Let $a$ be the number of elements $c \in \bigcup_{v \in X} L(v)$ that satisfy $\eta_X(c) = 3$.  First, we claim that $a \geq 2$.  To see why this is so, suppose $a=1$.  Then, independently and randomly color each vertex $v \in A$ with a color from $L(v)$ such that each color from $L(v)$ has an equal chance of being chosen.  For each $c \in \bigcup_{v \in X} L(v)$, let $X_c$ be the random variable that is equal to 1 if the color $c$ is used at least twice in coloring the vertices of $X$ and equal to 0 otherwise.  Notice that: when $\eta_X(c)=3$, we have that $P[X_c = 1] = 1/2$, when $\eta_X(c) = 2$, we have that $P[X_c = 1] = 1/4$, and when $\eta_X(c) = 1$, we have that $P[X_c = 1]=0$.  Since $\sum_{c \in \bigcup_{v \in X} L(v)} \eta_X(c) = 14$ and $a=1$, there are at most five elements in $\bigcup_{v \in X} L(v)$ that appear in two of the lists: $L(x_1), \ldots, L(x_7)$.  Consequently, $\E \left [ \sum_{c \in \bigcup_{v \in X} L(v)} X_c\right ] \leq \frac{1}{2}+\frac{5}{4} = \frac{7}{4}.$ So, we are able to proceed as we did in the case where $m_X \leq 2$.  This means we may assume that $a \geq 2$.  Now, we claim that there is no $c \in \bigcup_{v \in X} L(v)$ satisfying $\eta_X(c)=1$.  To see why this is so, suppose color $o$ is such a color.  Suppose without loss of generality that $L(x_1) = \{o,c\}$.  Now, let $L'$ be the 2-assignment for $G$ obtained from $L$ by replacing the $o$ in $L(x_1)$ with a color $d \neq c$ satisfying $\eta_X(d)=3$.  From the argument used for the case where $m_X=4$, we know that we can obtain an arborable $L'$-coloring $f$ of $G$ that uses no color more than 10 times.  Now, if $f(x_1)=d$ modify $f$ by recoloring $x_1$ with $o$; otherwise, do not modify $f$.  The resulting coloring is clearly an arborable $L$-coloring of $G$ that uses no color more than 10 times which is a contradiction.

Now, let $b$ be the number of elements $c \in \bigcup_{v \in X} L(v)$ that satisfy $\eta_X(c) = 2$.  By what we have shown thus far, we know that $3a+2b = 14$ and $a \geq 2$.  So, we will derive a contradiction in each of the following sub-cases to complete the proof: (1) $a=2$ and $b=4$ and (2) $a=4$ and $b=1$.  In each of these sub-cases we will assume without loss of generality that the color $c_1$ is such that $c_1 \in L(x_i)$ for each $i \in [3]$.  Also, we let $X' = \{x_4, x_5, x_6, x_7 \}$, and we let $\eta_{X'} : \bigcup_{v \in X'} L(v) \rightarrow \N$ be the function given by $\eta_{X'}(c) = |\{v \in X': c \in L(v) \}|$.

In sub-case (1) begin by coloring $x_1, x_2$, and $x_3$ with $c_1$.  If there is a color $o$ such that $\eta_{X'}(o)=1$, we assume without loss of generality that $o \in L(x_4)$ and we color $x_4$ with $o$.  Since $a=2$ it is not possible that $L(x_5)=L(x_6)=L(x_7)$.  So, we can color $x_5, x_6, x_7$ with pairwise distinct colors from $L(x_5)$, $L(x_6)$, and $L(x_7)$.  This contradicts our Observation.  So, we may assume that $\min_{c \in \bigcup_{v \in X'} L(v)} \eta_{X'}(c) \geq 2$.  Since $\max_{c \in \bigcup_{v \in X'} L(v)} \eta_{X'}(c) \leq 3$, $a=2$, and $\sum_{c \in \bigcup_{v \in X'} L(v)} \eta_{X'} (c) = 8$, it must be that the domain of $\eta_{X'}$ is of size 4 and $\eta_{X'}$ outputs 2 for each element in its domain.  It is then easy to see that we can color $x_4, x_5, x_6, x_7$ with pairwise distinct colors from $L(x_4)$, $L(x_5)$, $L(x_6)$, and $L(x_7)$ respectively (simply consider the case where the lists $L(x_4), L(x_5), L(x_6), L(x_7)$ are pairwise distinct and the case where they are not pairwise distinct).  This however contradicts our Observation.

In sub-case (2) we may suppose that $c_1, c_2, c_3, c_4, d$ are pairwise distinct colors such that $\eta_X(c_i)=3$ for each $i \in [4]$ and $\eta_X(d)=2$.  Since $\sum_{c \in \bigcup_{v \in X'} L(v)} \eta_{X'} (c) = 8$, we can complete sub-case (2) by considering the three following situations: (a) there is a color $z$ such that $\eta_{X'}(z)=1$ (note: $z \in \{c_1, c_2, c_3, c_4, d \}$), (b) the domain of $\eta_{X'}$ is of size 4 and $\eta_{X'}$ outputs 2 for each element in its domain, and (c) the domain of $\eta_{X'}$ is of size 3, $|\eta_{X'}^{-1}(3)|=2$, and $|\eta_{X'}^{-1}(2)|=1$.

For (a) we assume without loss of generality that $z \in L(x_4)$.  Now, if it is not the case that $L(x_5)=L(x_6)=L(x_7)$ we can proceed as we did at the beginning of sub-case (1).  So, we may assume that $L(x_5)=L(x_6)=L(x_7) = \{c_2, c_3\}$.  This means that we can assume without loss of generality that $L(x_4)=\{c_4,d \}$, $L(x_3)=L(x_2)=\{c_1,c_4\}$, and $L(x_1)= \{c_1,d\}$.  It is now clear that it is possible to $L$-color the vertices in $X$ in the following four different ways: (1) $c_1$ is used 3 times, $c_2$ is used 3 times, and $d$ is used once, (2) $c_1$ is used 3 times, $c_3$ is used 3 times, and $d$ is used once, (3) $c_4$ is used 3 times, $c_2$ is used 3 times, and $d$ is used once, and (4) $c_4$ is used 3 times, $c_3$ is used 3 times, and $d$ is used once.  We know that none of these four partial $L$-colorings is extendable to an arborable $L$-coloring of $G$.  By Proposition~\ref{pro: twocolors} we have $|L^{-1}(\{c_1,c_2\}) \cap Y| \geq 3$, $|L^{-1}(\{c_1,c_3\}) \cap Y| \geq 3$, $|L^{-1}(\{c_2,c_4\}) \cap Y| \geq 3$, and $|L^{-1}(\{c_3,c_4\}) \cap Y| \geq 3$.  This implies $|Y| \geq 12$ which is a contradiction.

For (b) we may proceed as we did at the end of sub-case (1).

For (c) we may assume without loss of generality that $\eta_{X'}(c_2) = \eta_{X'}(c_3) = 3$.  Then, it must be that $\eta_{X'}(d)=2$ or $\eta_{X'}(c_4) = 2$.  If $\eta_{X'}(d) = 2$, we can assume without loss of generality that $L(x_1)=L(x_2)=L(x_3) = \{c_1, c_4 \}$, $L(x_4)=L(x_5)= \{c_2,c_3\}$, $L(x_6) = \{c_2,d\}$, and $L(x_7) = \{c_3,d\}$.  Then, we can get a contradiction by proceeding in a fashion like situation (a).  Finally, if $\eta_{X'}(c_4)=2$, we can assume without loss of generality that $L(x_1)=L(x_2) = \{c_1, d \}$, $L(x_3)=\{c_1,c_4\}$, $L(x_4)=L(x_5)= \{c_2,c_3\}$, $L(x_6) = \{c_2,c_4\}$, and $L(x_7) = \{c_3,c_4\}$.  It is now clear that it is possible to $L$-color the vertices in $X$ in the following four different ways: (1) $c_1$ is used 3 times, $c_2$ is used 3 times, and $c_4$ is used once, (2) $c_1$ is used 3 times, $c_3$ is used 3 times, and $c_4$ is used once, (3) $d$ is used 2 times, $c_2$ is used 3 times, $c_1$ is used once, and $c_4$ is used once, and (4) $d$ is used 2 times, $c_3$ is used 3 times, $c_1$ is used once, and $c_4$ is used once.  We know that none of these four partial $L$-colorings is extendable to an arborable $L$-coloring of $G$.  By Proposition~\ref{pro: twocolors} we have $|L^{-1}(\{c_1,c_2\}) \cap Y| \geq 3$, $|L^{-1}(\{c_1,c_3\}) \cap Y| \geq 3$, $|L^{-1}(\{d,c_2\}) \cap Y| \geq 3$, and $|L^{-1}(\{d,c_3\}) \cap Y| \geq 3$.  This implies $|Y| \geq 12$ which is a contradiction.
\end{proof}

We are finally ready to show that $K_{11,17}$ is equitably 3-list arborable, but it is not equitably 4-list arborable.

\begin{thm} \label{thm: notmonotone}
Let $G=K_{11,17}$.  Then, $G$ is equitably 3-list arborable, but $G$ is not equitably 4-list arborable.
\end{thm}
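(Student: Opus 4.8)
The plan is to prove the two assertions separately; the non-monotonicity part turns out to be the simpler of the two. To see that $G=K_{11,17}$ is not equitably $4$-list arborable, I would take the $4$-assignment $L$ with $L(v)=\{1,2,3,4\}$ for every $v\in V(G)$ and show $G$ is not equitably $L$-colorable. Since $\lceil |V(G)|/4\rceil=7$ and $|V(G)|=28$, any equitable $L$-coloring must use each of the four available colors exactly $7$ times, hence partitions $V(G)$ into four classes of size $7$, each of which, being a color class of an arborable coloring, is acyclic. Writing $X,Y$ for the sides of sizes $11,17$, Proposition~\ref{pro: obvious} gives, for each class $C$, that $|C\cap X|\le 1$ or $|C\cap Y|\le 1$; in the first case $|C\cap X|\in\{0,1\}$, and in the second $|C\cap X|=7-|C\cap Y|\in\{6,7\}$, so $|C\cap X|\in\{0,1,6,7\}$ for every class. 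But the four values $|C\cap X|$ sum to $|X|=11$, and no four elements of $\{0,1,6,7\}$ sum to $11$: two of them being at least $6$ forces a sum at least $12$, while otherwise the sum is at most $7+1+1+1=10$. This contradiction finishes the non-monotonicity part.

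For the positive part, note that $\lceil |V(G)|/3\rceil=10$, and let $L$ be an arbitrary $3$-assignment for $G$, with sides $X=\{x_1,\dots,x_{11}\}$ and $Y=\{y_1,\dots,y_{17}\}$. I would split into cases according to how concentrated the lists are. First, if some color $c$ belongs to at least $10$ of the lists $L(y_j)$, color $10$ such vertices with $c$ — this color class lies entirely in $Y$, hence is acyclic — and delete $c$ from every remaining list to obtain lists of size at least $2$ on the remaining $K_{11,7}\cong K_{7,11}$; choosing $2$-element sublists and invoking Lemma~\ref{lem: K711} produces an arborable coloring of the remainder in which no color is used more than $10$ times, and together with the $10$ copies of $c$ this is the desired equitable, arborable $L$-coloring. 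Second, if no color lies in $10$ of the lists of $Y$ but some $c$ lies in at least $10$ of the $11$ lists of $X$, color $10$ vertices of $X$ with $c$ (again an acyclic class); the remaining graph is a star $K_{1,17}$, so one may color its center with a color $\ne c$ from its list and then greedily color the leaves, avoiding $c$, so that no color is used more than $9$ times on the leaves — possible since two colors used $9$ times on the $17$ leaves would account for more than $17$ vertices — and the resulting $L$-coloring is again equitable and arborable.

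The remaining case is when every color lies in at most $9$ of the lists on each side. Here I would first try to color $X$ by a system of distinct representatives of the lists $L(x_1),\dots,L(x_{11})$. If one exists, every color class meets $X$ in at most one vertex and so is automatically acyclic, and it only remains to color $Y$ greedily from the lists so that no color is used more than $10$ times; this works because a color used $10$ times would, having at most one occurrence in $X$, occupy at least $9$ vertices of $Y$, and two such colors would need $18>17$ vertices of $Y$. If no system of distinct representatives exists, Hall's theorem yields a set $Z\subseteq X$ with $|Z|\ge 4$ and $\bigl|\bigcup_{x\in Z}L(x)\bigr|\le|Z|-1$, so the lists on $Z$ are forced to behave like those of a near-complete graph. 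I expect this last subcase to be the main obstacle: one must color $X$ (handling $Z$ carefully) with few repeated colors, noting that any color repeated on $X$ may then appear at most once on $Y$, and then balance the counts on $Y$ without exceeding $10$ — presumably through a structural case analysis on the lists of $Z$ in the spirit of the proof of Lemma~\ref{lem: K711}, supplemented by Propositions~\ref{pro: prob} and~\ref{pro: twocolors} to eliminate the residual obstructions.
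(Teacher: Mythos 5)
Your first part (the failure of equitable $4$-list arborability) is correct and essentially the paper's argument, just packaged as ``no four elements of $\{0,1,6,7\}$ sum to $11$'' instead of isolating the second-largest color class; either way the constant assignment $\{1,2,3,4\}$ does the job. Your cases where some color appears in at least $10$ of the lists on $Y$ (color ten such vertices alike and reduce to Lemma~\ref{lem: K711} on $K_{7,11}$ with $2$-lists) or in at least $10$ of the lists on $X$ (color ten vertices of $X$ alike and finish greedily on the remaining $K_{1,17}$) also match the paper's two concentrated cases and are sound.

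The gap is in the remaining case, where every color lies in at most $9$ lists on each side. You propose finding a system of distinct representatives for the eleven lists on $X$, and when Hall's condition fails you explicitly leave the analysis unfinished (``presumably through a structural case analysis\dots''). That subcase is genuine work and is not done. The idea you are missing is that no case analysis is needed here at all: Proposition~\ref{pro: prob} applies directly to $K_{11,17}$ with $k=3$, since $11+17=28\le(3+1)2^3-1=31$, and produces an arborable $L$-coloring $f$ of the whole graph. By Proposition~\ref{pro: obvious} every color class $C$ of $f$ satisfies $|C\cap X|\le 1$ or $|C\cap Y|\le 1$, and since its color appears in at most $9$ lists on the other side, $|C|\le 1+9=10=\lceil 28/3\rceil$; that is, under the hypothesis of this case \emph{every} arborable $L$-coloring is automatically equitable. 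So Proposition~\ref{pro: prob} should be the entire engine of this case, not a supplement to an SDR/Hall analysis; as written, your proof of the positive direction is incomplete.
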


\begin{proof}
Throughout the proof suppose the bipartition of $G$ is $X= \{x_1, \ldots, x_{11}\}$, $Y = \{y_1, \ldots, y_{17} \}$.  First, we will show that $G$ is not equitably 4-list arborable by constructing a 4-assignment $L$ for $G$ for which there is no equitable, arborable $L$-coloring of $G$.  Suppose $L$ is the 4-assignment for $G$ that assigns the list $\{1,2,3,4\}$ to every vertex.  For the sake of contradiction, suppose that $f$ is an equitable, arborable $L$-coloring of $G$.  We know that $|f^{-1}(i)| \leq 7$ for each $i \in [4]$.  This along with the fact that $|V(G)|=28$ implies $|f^{-1}(i)| = 7$ for each $i \in [4]$.  Without loss of generality, suppose that $|f^{-1}(1) \cap X| \geq |f^{-1}(2) \cap X| \geq |f^{-1}(3) \cap X| \geq |f^{-1}(4) \cap X|$.  By the fact that $|X|=11$ and the Pigeonhole Principle, $2 \leq |f^{-1}(2) \cap X| \leq 5$.  So, $|f^{-1}(2) \cap Y| \geq 2$, and consequently $f$ is not an arborable $L$-coloring of $G$ by Proposition~\ref{pro: obvious} which is a contradiction.

Now, we will show that $G$ is equitably 3-list arborable.  Suppose that $L$ is an arbitrary $3$-assignment for $G$.  We will show that an arborable $L$-coloring of $G$ that uses no color more than $\lceil |V(G)|/3 \rceil$ times exists.  Since $|V(G)|=28 \leq 2^3(4)-1$, we know that an arborable $L$-coloring of $G$ exists by Proposition~\ref{pro: prob}.  So, if each $c \in \bigcup_{v \in V(G)} L(v)$  has the property that it appears in no more than 9 of the lists: $L(x_1), \ldots, L(x_{11})$ and no more than 9 of the lists: $L(y_1), \ldots, L(y_{17})$, we are done since any arborable $L$-coloring of $G$ will also be equitable if this holds.

So, we just need to construct an equitable, arborable $L$-coloring of $G$ in each of the following cases: (1) there is a $c \in \bigcup_{v \in V(G)} L(v)$ such that $c$ appears in at least 10 of the lists: $L(x_1), \ldots, L(x_{11})$ or (2) there is a $c \in \bigcup_{v \in V(G)} L(v)$ such that $c$ appears in at least 10 of the lists: $L(y_1), \ldots, L(y_{17})$.  In case (1) we can assume without loss of generality that $c \in L(x_i)$ for each $i \in [10]$.  Suppose we color each of $x_1, \ldots, x_{10}$ with $c$.  Then, for each $v \in V(G) - \{x_i: i \in [10]\}$, let $L'(v) = L(v) - \{c\}$.  Now, we can greedily color each vertex $x_{11}, y_1, \ldots, y_{17}$ with a color assigned to the vertex by $L'$ in such a way that no color is used more than 10 times (the coloring need not be proper).  This completes an equitable, arborable $L$-coloring of $G$.

For case (2) assume without loss of generality that $c \in L(y_i)$ for each $i \in [10]$.  Suppose we color each of $y_1, \ldots, y_{10}$ with $c$.  Then, for each $v \in V(G) - \{y_i: i \in [10]\}$, let $L'(v) = L(v) - \{c\}$, and note that $|L'(v)| \geq 2$.  Lemma~\ref{lem: K711} then implies that there is an arborable $L'$-coloring of $G - \{y_i: i \in [10]\}$ that uses no color more than 10 times.  Such a coloring completes an equitable, arborable $L$-coloring of $G$.
\end{proof}

\section{Verifying Conjecture~\ref{conj: stronger} for Certain Graphs} \label{conjecture}

We begin by verifying Conjecture~\ref{conj: stronger} for graphs with high maximum degree. 

\begin{pro} \label{pro: almostcomplete}
If $n \geq 3$ and $G = K_n - e$ where $e \in E(G)$, then $G$ is equitably $k$-list arborable whenever $k \geq \lceil \Delta(G)/2 \rceil$.
\end{pro}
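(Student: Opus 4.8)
The plan is to split on the size of $k$ relative to $\lceil n/2 \rceil$ and reduce each part to a result already available. Write $e = uv$, so that for $n \ge 4$ we have $\Delta(G) = n-1$; the case $n = 3$ is immediate, since then $G = P_3$ is acyclic, so every $L$-coloring of $G$ is arborable and $\lceil \Delta(G)/2\rceil = 1$. The one structural fact I would record first is this: \emph{every arborable coloring of $G = K_n - e$ has all color classes of size at most $3$.} Indeed, any four vertices of $G$ induce either $K_4$ or $K_4 - e$, and each of these contains a triangle; hence no arborable color class can contain four or more vertices. (We will not even need the sharper observation that a color class of size exactly $3$ must be of the form $\{u,v,w\}$.)

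For the first range, suppose $k \ge \lceil n/2 \rceil$. Then $G$ is a spanning subgraph of $K_n$, and since $K_n$ is equitably $k$-list arborable for every such $k$ (as recorded in the Introduction, $K_n$ is equitably $k$-list arborable if and only if $k \ge \lceil n/2\rceil$), the same holds for $G$. This handles all $k \ge \lceil n/2\rceil$; in particular it finishes the case $n$ even, where $\lceil \Delta(G)/2\rceil = n/2 = \lceil n/2\rceil$, as well as the small case $n = 4$.

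For the remaining range, $\lceil (n-1)/2\rceil \le k < \lceil n/2\rceil$: a parity check shows this set of integers is nonempty only when $n$ is odd, in which case it consists of the single value $k = (n-1)/2$, and $n \ge 5$ forces $k \ge 2$. Here $\lceil n/k \rceil = \lceil 2n/(n-1)\rceil = 3$. Now $G = K_n - e$ is connected, non-complete, and has $\Delta(G) = n-1 \ge 3$, so Theorem~\ref{thm: upperarbor} applies and gives that $G$ is $k$-list arborable, since $k = (n-1)/2 = \Delta(G)/2$. Given any $k$-assignment $L$ for $G$, take an arborable $L$-coloring of $G$; by the structural fact its color classes all have size at most $3 = \lceil n/k\rceil$, so it is automatically an equitable, arborable $L$-coloring. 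Hence $G$ is equitably $k$-list arborable, which completes the proof.

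There is no serious obstacle in this argument; the only thing to be careful about is the bookkeeping showing that the two ranges together cover all $k \ge \lceil \Delta(G)/2\rceil$ and that the leftover range collapses to the single value $(n-1)/2$ with $n$ odd, so that $\Delta(G)/2$ is an integer at least $2$ and Theorem~\ref{thm: upperarbor} is genuinely applicable. The conceptual point is simply that for $K_n - e$ arborability already caps every color class at size $3$, so once the number of colors drops to $(n-1)/2$ the equitability constraint $\lceil n/k\rceil = 3$ is vacuous and plain list arboricity, supplied by the list analogue of Brooks' theorem, suffices.
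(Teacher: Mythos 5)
Your argument is correct, and for the interesting case it takes a genuinely different route from the paper. Both proofs dispose of $k \ge \lceil n/2\rceil$ (and hence of even $n$) identically, by viewing $K_n-e$ as a spanning subgraph of $K_n$. For the remaining case ($n$ odd, $k=(n-1)/2$), the paper gives a self-contained greedy construction: it colors the $2\ell$ vertices other than one endpoint of the missing edge so that no color is used more than twice, and then colors the last vertex with a color not yet used twice, checking directly that the one possible class of size $3$ induces a forest. You instead combine two observations: any arborable coloring of $K_n-e$ has all color classes of size at most $3$ (since every $4$-set induces $K_4$ or $K_4-e$, each containing a triangle), and $\lceil n/k\rceil=3$ exactly at $k=(n-1)/2$, so the equitability condition is vacuous and it suffices to invoke Theorem~\ref{thm: upperarbor} for the connected, non-complete graph $K_n-e$ with $\Delta=n-1\ge 4$. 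Your version is shorter and makes the conceptual reason transparent (arborability alone already caps the classes at the equitable threshold), at the cost of leaning on the imported Brooks-type theorem rather than being constructive and self-contained; your bookkeeping of the two ranges and the base cases $n=3,4$ is also correct.
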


\begin{proof}
The result clearly follows when $n$ is even since we know: Conjecture~\ref{conj: HSanalogue} holds for complete graphs, $G$ is a spanning subgraph of a complete graph on $n$ vertices, and when $n$ is even, $\lceil \Delta(G)/2 \rceil = \lceil (n-1)/2 \rceil = n/2 = \lceil (\Delta(K_n) + 1)/2 \rceil$.  So, we may suppose that $n = 2\ell+1$ where $\ell \in \N$.  Similar to when $n$ is even, since $G$ is a spanning subgraph of a complete graph on $n$ vertices, we know that $G$ is equitably $k$-list arborable whenever $k \geq \lceil (\Delta(K_n) + 1)/2 \rceil = \ell+1$.  So, to complete the proof we need only show that $G$ is equitably $\ell$-list arborable.  Suppose that $L$ is an arbitrary $\ell$-assignment for $G$.  We will now construct an equitable, arborable $L$-coloring $f$ of $G$ to complete the proof.

Suppose $V(G) = \{v_1, v_2, \ldots, v_{2\ell+1} \}$, and without loss of generality, assume that $v_1v_{2\ell+1} \notin E(G)$.  Begin by greedily coloring $v_1, \ldots, v_{2\ell}$ so that for each $i \in [2\ell]$, $f(v_i) \in L(v_i)$ and no color is used more than twice in coloring $v_1, \ldots, v_{2\ell}$.  Then, let
$$L'(v_{2\ell+1}) = L(v_{2\ell+1}) - \{c \in L(v_{2\ell+1}) : \text{$c$ is used twice in coloring $v_1, \ldots, v_{2\ell}$} \}.$$
Since $d_{G}(v_{2\ell+1}) = 2\ell-1$ and $|L(v_{2\ell+1})|=\ell$, we know that $|L'(v_{2\ell+1})| \geq 1$.  So, we can color $v_{2\ell+1}$ so that $f(v_{2\ell+1}) \in L'(v_{2\ell+1})$.  By construction, $f$ uses no color more than $\lceil |V(G)|/\ell \rceil = 3 $ times, and the only color that can be used 3 times by $f$ is $f(v_{2\ell+1})$.  However, since $v_{2\ell+1}$ can only be adjacent to at most one vertex colored with $f(v_{2\ell+1})$, we know $G[f^{-1}(f(v_{2\ell+1}))]$ is acyclic.  Consequently, $f$ is an equitable, arborable $L$-coloring of $G$, and $G$ is equitably $\ell$-list arborable.
\end{proof}

\begin{pro} \label{pro: small}
Suppose $G$ is a $2\ell$-regular graph with $\ell \geq 2$.  If $|V(G)|=2\ell+2$, then $G$ is equitably $k$-list arborable whenever $k \geq \ell$.
\end{pro}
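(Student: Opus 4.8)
The plan is to reduce at once to the case $k=\ell$. Since $G$ is $2\ell$-regular on $2\ell+2$ vertices, its complement is $1$-regular, hence a perfect matching; so $G=K_{2\ell+2}-M$ for a perfect matching $M=\{Q_1,\dots,Q_{\ell+1}\}$ with $Q_i=\{u_i,w_i\}$, i.e. $G$ is the complete multipartite graph with $\ell+1$ parts of size $2$. For $k\ge \ell+1$ we are done immediately: $\ell+1=\lceil |V(G)|/2\rceil$, and every $n$-vertex graph is equitably $k$-list arborable whenever $k\ge\lceil n/2\rceil$ (noted in the introduction). So assume $k=\ell$; here $\lceil |V(G)|/\ell\rceil=3$ because $\ell\ge 2$. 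I will use two structural facts about $G$: every $4$-element subset of $V(G)$ induces a subgraph containing a cycle (it is $K_4$ minus at most two disjoint edges), so in any arborable $L$-coloring each color class has at most $3$ vertices; and a $3$-element subset of $V(G)$ induces an acyclic subgraph (a path on $3$ vertices) if and only if two of its vertices form some matching edge $Q_i$. Thus for $k=\ell$ an equitable arborable $L$-coloring of $G$ is precisely an $L$-coloring using each color at most $3$ times in which every color class of size $3$ contains a whole $Q_i$.

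The first step for $k=\ell$ is to try for the stronger property that each color is used at most twice, in which case all classes have size $\le 2$ and are automatically acyclic. Viewing the vertex-color incidences as a bipartite graph in which each color has capacity $2$, the deficiency form of Hall's theorem tells us such an $L$-coloring exists unless there is a set $T$ of colors for which more than $2|T|$ vertices have their entire list contained in $T$; since $|L(v)|=\ell$ for all $v$ and $|V(G)|=2\ell+2$, this can fail only when some $\ell$-element set $S_0$ is the list of at least $2\ell+1$ of the $2\ell+2$ vertices. Away from that degenerate regime we simply take a capacity-$2$ coloring and finish.

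It remains to treat the degenerate case: at least $2\ell+1$ vertices share a common list $S_0$, so at most one vertex $z$ has a list other than $S_0$. I would build the coloring by hand. Color $z$ with some $c_0\in L(z)$; the remaining $2\ell+1$ vertices of $G-z$ all have list $S_0$ and consist of the $\ell$ matching edges other than the one containing $z$, together with $z$'s partner $\bar z$. Color $\ell-1$ of those edges monochromatically with $\ell-1$ distinct colors of $S_0$ (taking $c_0$ to be one of these if $c_0\in S_0$), and color the last edge together with $\bar z$ with the remaining color of $S_0$; as $\bar z$ is adjacent in $G-z$ to both endpoints of that edge, this class is a path on $3$ vertices. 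A short check confirms that no color is used more than $3$ times in $G$ and that every size-$3$ class has the form ``a matching edge plus $\bar z$'', so the coloring is equitable and arborable. The only genuinely delicate point in the whole argument is this concentrated case, and the one thing to watch there is the bookkeeping that prevents $c_0$ from being used a fourth time and that keeps every color class of size $3$ of the permitted shape.
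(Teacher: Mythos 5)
Your proof is correct, but it takes a genuinely different route from the paper's. The paper argues by contradiction: since $G$ is connected, non-complete, and $2\ell$-regular with $\Delta(G)\ge 3$, Theorem~\ref{thm: upperarbor} yields an arborable $L$-coloring $f$ for any $\ell$-assignment $L$; then any color class $f^{-1}(c)$ of size greater than $\lceil |V(G)|/\ell\rceil = 3$ would induce a subgraph in which every vertex has degree at least $|f^{-1}(c)|-2\ge 2$ (each vertex of $G$ has exactly one non-neighbor), hence a cycle. So every arborable $L$-coloring of this $G$ is automatically equitable. Your argument instead exploits the explicit structure $G=K_{2\ell+2}-M$ for a perfect matching $M$: a capacitated Hall's theorem gives a coloring using each color at most twice unless some $\ell$-set is the common list of at least $2\ell+1$ vertices, and that degenerate case is finished by an explicit construction whose size-$3$ classes each contain a matching edge and hence induce a $P_3$; I checked both branches ($c_0\in S_0$ and $c_0\notin S_0$) and they work. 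The trade-off: the paper's proof is much shorter and shows the stronger fact that equitability comes for free once arborability is known, at the cost of invoking the Brooks-type Theorem~\ref{thm: upperarbor}; yours is self-contained and constructive, and shows that outside one near-constant list assignment each color can even be used at most twice. The only polish needed is to specify $z$ when all $2\ell+2$ lists equal $S_0$ (pick $z$ arbitrarily; your $c_0\in S_0$ branch then applies).
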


\begin{proof}
The result is obvious when $k > \ell$.  So, we need only show that $G$ is equitably $\ell$-list arborable.

For the sake of contradiction, suppose that $G$ is not equitably $\ell$-list arborable.  Let $L$ be an $\ell$-assignment for $G$ for which there is no equitable, arborable $L$-coloring of $G$.  Since $G$ is not a complete graph, by Theorem~\ref{thm: upperarbor}, we know that there is an arborable $L$-coloring of $G$.  Suppose $f$ is an arborable $L$-coloring of $G$.  Since $f$ is not an equitable, arborable $L$-coloring of $G$, we know that there is a $c \in \bigcup_{v \in  V(G)} L(v)$ such that $|f^{-1}(c)| > \lceil |V(G)|/l \rceil = 3$.  Let $G' = G[f^{-1}(c)]$.  Note that for each $v \in V(G')$, $v$ is not adjacent to exactly one vertex in $G$.  So, we have that $d_{G'}(v) \geq |V(G')|-2 \geq 2$ which implies that $G'$ contains a cycle.  This however contradicts the fact that $f$ is an arborable $L$-coloring of $G$.
\end{proof}

We now improve upon a result in~\cite{Z16} and verify Conjecture~\ref{conj: stronger} for 2-degenerate graphs.  Our proof uses the following Lemma.

\begin{lem} [\cite{Z16}] \label{lem: pelsmajerlike}
Suppose $S= \{x_1, \ldots, x_k \}$ where $x_1, \ldots, x_k$ are distinct vertices of $G$.  Suppose that $L$ is a $k$-assignment for $G$, and $L'$ is the $k$-assignment for $G-S$ obtained by restricting the domain of $L$ to $V(G-S)$.  If an equitable, arborable $L'$-coloring of $G-S$ exists and
$$|N_G(x_i) - S| \leq 2i-1$$
for each $i \in [k]$, then an equitable, arborable $L$-coloring of $G$ exists.
\end{lem}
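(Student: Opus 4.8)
The plan is to extend the given coloring of $G-S$ to all of $G$ by coloring $x_k, x_{k-1}, \ldots, x_1$ greedily, in this reverse order, one vertex at a time, always using pairwise distinct colors on $S$. Fix an equitable, arborable $L'$-coloring $f$ of $G-S$ and set $N = |V(G-S)|$, so that $|V(G)| = N+k$ and hence $\lceil |V(G)|/k \rceil = \lceil N/k \rceil + 1$. Since $f$ uses each color at most $\lceil N/k \rceil$ times, \emph{any} extension of $f$ to $S$ that uses each color on at most one vertex of $S$ is automatically equitable for $G$; so equitability reduces to keeping the colors on $x_1, \ldots, x_k$ distinct, and the real content is to do this while never creating a monochromatic cycle.

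First I would set up an induction on the number of colored vertices of $S$. Suppose $x_{i+1}, \ldots, x_k$ have already been assigned pairwise distinct colors from their lists so that the partial coloring of $V(G-S) \cup \{x_{i+1}, \ldots, x_k\}$ agreeing with $f$ on $V(G-S)$ is arborable, and consider coloring $x_i$. Call a color $c \in L(x_i)$ \emph{acceptable} if (a) $c$ is not used on any of $x_{i+1}, \ldots, x_k$, and (b) at most one already-colored neighbor of $x_i$ has color $c$. Condition (b) forces $x_i$ to have degree at most $1$ in the new color class $G[\{v : h(v) = c\}]$, so no cycle through $x_i$ is created and arborability is preserved; condition (a) keeps the colors on $S$ distinct.

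The key step is to count the unacceptable colors. Condition (a) forbids at most $k-i$ colors. For condition (b): if $c$ is not already used on $S$, then every already-colored neighbor of $x_i$ with color $c$ lies in $V(G-S)$, hence in $N_G(x_i) - S$, and receives color $c$ from $f$; thus (b) rules out only those $c$ with $|f^{-1}(c) \cap (N_G(x_i)-S)| \ge 2$, and since the sets $f^{-1}(c) \cap (N_G(x_i)-S)$ are pairwise disjoint subsets of $N_G(x_i)-S$, there are at most $\lfloor |N_G(x_i)-S|/2 \rfloor \le \lfloor (2i-1)/2 \rfloor = i-1$ of them. Hence at most $(k-i) + (i-1) = k-1 < k = |L(x_i)|$ colors are unacceptable, so $x_i$ can be colored. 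Iterating from $i=k$ down to $i=1$ produces the desired $h$: it uses distinct colors on $S$ (so it is equitable, as noted), and it is arborable after each step, in particular at the end.

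I do not anticipate a genuine obstacle here: this is a direct greedy induction, essentially the list/arboricity analogue of the Kostochka--Pelsmajer--West argument for equitable choosability that Lemma~\ref{lem: general} later generalizes. The one point to handle carefully is the counting in the previous paragraph, and in particular the observation that once the colors on $x_{i+1}, \ldots, x_k$ are forbidden, the only remaining threat to arborability is two $f$-colored neighbors of $x_i$ outside $S$ sharing a color --- distinctness of the colors on $S$ rules out any monochromatic cycle using two vertices of $S$. The hypothesis $|N_G(x_i) - S| \le 2i-1$ is exactly what makes these two groups of forbidden colors total fewer than $k$.
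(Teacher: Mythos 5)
Your proof is correct. The paper does not reprove this lemma (it is quoted from Zhang~\cite{Z16}) but instead proves the generalization Lemma~\ref{lem: general}, and your argument is exactly the $m=1$ instance of that lemma's proof combined with the greedy selection of a rainbow $D$-coloring of $S$ (your count of at most $\lfloor(2i-1)/2\rfloor = i-1$ colors killed by two $f$-colored neighbors is precisely Observation~\ref{obs: obvious}'s bound $|D(x_i)|\ge k-(i-1)$), so it matches the paper's approach.
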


\begin{thm} \label{thm: 2degen}
If $G$ is a 2-degenerate graph with $\Delta(G) \geq 3$, then $G$ is equitably $k$-list arborable whenever $k \geq \lceil \Delta(G)/2 \rceil$.
\end{thm}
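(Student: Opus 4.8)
The plan is to prove, by strong induction on $|V(G)|$, the slightly more general statement that every $2$-degenerate graph $G$ is equitably $k$-list arborable whenever $k \geq \max\{2,\lceil \Delta(G)/2 \rceil\}$; the theorem is then the special case $\Delta(G) \geq 3$, for which $\lceil \Delta(G)/2 \rceil \geq 2$ automatically. Formulating it this way is convenient because after deleting vertices the maximum degree may drop below $3$. The base case is $|V(G)| \leq 2k$: then $k \geq \lceil |V(G)|/2 \rceil$, and every $n$-vertex graph is equitably $k$-list arborable when $k \geq \lceil n/2 \rceil$, as noted in Section~\ref{intro}. For the inductive step, assume $|V(G)| \geq 2k+1$ and fix a $k$-assignment $L$. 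I would produce a set $S = \{x_1,\ldots,x_k\}$ of distinct vertices with $|N_G(x_i) - S| \leq 2i-1$ for every $i \in [k]$; then $G - S$ is $2$-degenerate with $\Delta(G-S) \leq \Delta(G)$ and fewer vertices, so by induction the restricted list assignment $L'$ admits an equitable, arborable $L'$-coloring, and Lemma~\ref{lem: pelsmajerlike} lifts this to an equitable, arborable $L$-coloring of $G$. As $L$ is arbitrary, this completes the step.

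The construction of $S$ proceeds greedily, exploiting that every subgraph of $G$ has a vertex of degree at most $2$. First pick $x_1$ of degree at most $2$ in $G$. If $d_G(x_1)=2$, single out one neighbor $a$ of $x_1$ which will be required to lie in $S$ (this ensures $|N_G(x_1)-S| \leq 1$); if $d_G(x_1) \leq 1$, no such $a$ is needed. Then for $i = 2,\ldots,k$ pick $x_i$ to be a vertex of degree at most $2$ in $G - \{x_1,\ldots,x_{i-1}\}$, preferring $x_i \neq a$ whenever some low-degree vertex other than $a$ is available, and setting $x_k := a$ at the last step if $a$ has not already been selected. The subgraphs $G - \{x_1,\ldots,x_{i-1}\}$ with $i \leq k$ remain nonempty because $|V(G)| \geq 2k+1$, so this is always possible.

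The verification of the degree bounds splits into three cases, and this is the only place that requires care. For $x_1$ the bound $|N_G(x_1)-S| \leq 1$ holds by construction. For an index $2 \leq i \leq k$ at which $x_i$ was chosen as a degree-$\leq 2$ vertex of $G-\{x_1,\ldots,x_{i-1}\}$, we get $|N_G(x_i)-S| \leq |N_G(x_i)-\{x_1,\ldots,x_{i-1}\}| \leq 2 \leq 2i-1$. The subtle case is the vertex $a$: either $a$ is \emph{forced in} at some step $i \leq k-1$ because it is the unique low-degree vertex of $G-\{x_1,\ldots,x_{i-1}\}$, in which case it still has at most $2 \leq 2i-1$ neighbors outside $\{x_1,\ldots,x_{i-1}\}$; or $a$ is placed as $x_k$, possibly with large degree, but then $x_1 \in N_G(a) \cap S$ together with $d_G(a) \leq \Delta(G) \leq 2k$ gives $|N_G(a)-S| \leq 2k-1$, which is exactly the constraint for index $k$. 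I expect the main obstacle to be organizing this case analysis cleanly — confirming the cases are exhaustive and that the greedy choice never gets stuck — and noting that having room to force $a$ into $S$ is precisely what uses $k \geq 2$; everything else is routine.
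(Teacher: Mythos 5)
Your proposal is correct and takes essentially the same route as the paper: a greedy construction of $S$ using $2$-degeneracy, with a designated neighbor of the low-degree vertex $x_1$ forced into $S$ so that $|N_G(x_1)-S|\leq 1$ and the last index absorbing a possibly high-degree vertex via the slack $2k-1\geq \Delta(G)-1$, followed by an application of Lemma~\ref{lem: pelsmajerlike}. The only differences are organizational: you strengthen the induction hypothesis to $k\geq\max\{2,\lceil\Delta(G)/2\rceil\}$ so that the case $\Delta(G-S)\leq 2$ is handled internally (the paper instead cites Theorem~5 of~\cite{Z16} there), and you defer placing $a$ into $S$ until the last step rather than fixing $x_k=a$ from the outset as the paper does, which adds the ``forced in early'' subcase but does not affect correctness.
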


\begin{proof}
Our proof is by induction on $|V(G)|$.  Note that the result is clear when $|V(G)| = 4$.  So, assume that $|V(G)| > 4$ and the desired result holds for all graphs having less than $|V(G)|$ vertices.  Suppose $k$ satisfies $k \geq \lceil \Delta(G)/2 \rceil$ (we may assume that $k < |V(G)|$ since $G$ is clearly equitably $k$-list arborable whenever $k \geq |V(G)|$).

Let $L$ be an arbitrary $k$-assignment for $G$.  Suppose that $uv \in E(G)$ and $d_G(u) \leq 2$.  Let $x_1=u$ and $x_k = v$.  We construct a subset $S$ of $V(G)$ via the following inductive process.  Begin by placing $x_1$ and $x_k$ in $S$.  Then if $k \geq 3$, for each $i \in \{2, \ldots, k-1 \}$ let $x_i$ be a vertex of degree at most 2 in the graph $G - \{x_1, \ldots, x_{i-1},x_k \}$ (such an $x_i$ exists since $G$ is 2-degenerate).  Now, consider $G-S$ and the $k$-assignment $L'$ for $G-S$ obtained by restricting the domain of $L$ to $V(G-S)$.  Note $G-S$ is 2-degenerate and $\Delta(G-S) \leq \Delta(G)$.  If $\Delta(G-S) \leq 2$, then an equitable, arborable $L'$-coloring of $G-S$ exists by Theorem~5 in~\cite{Z16} (since $\lceil \Delta(G)/2 \rceil \geq 2 \geq \lceil (\Delta(G-S)+1)/2 \rceil$).  If $\Delta(G-S) \geq 3$, then an equitable, arborable $L'$-coloring of $G-S$ exists by the inductive hypotheses.

When $k \geq 3$, it is clear that for each $i \in \{2, \ldots, k-1 \}$, $|N_G(x_i) - S| \leq 2 \leq 2i-1$.  Also, for all possible $k$, $|N_G(x_1)-S| \leq 1 = 2(1)-1$ and $|N_G(x_k) - S| \leq \Delta(G)-1 \leq 2k-1$.  So, an equitable, arborable $L$-coloring of $G$ exists by Lemma~\ref{lem: pelsmajerlike}, and we have that $G$ is equitably $k$-list arborable.
\end{proof}

The next theorem along with Proposition~\ref{pro: almostcomplete} shows Conjecture~\ref{conj: stronger} holds for powers of paths.

\begin{customthm} {\bf \ref{pro: pathpower}}
For $n, p \in \N$, let $G= P_{n}^p$. Then, $G$ is equitably $k$-list arborable for each $k$ satisfying $k \geq p$.
\end{customthm}

\begin{proof}
Suppose $p,k$ are fixed natural numbers satisfying $k \geq p$.  We will prove that $G$ is equitably $k$-list arborable by induction on $n$.  For the basis step note that when $n \leq 2k-1$ the result is clear.

So, suppose that $n \geq 2k$, and the desired result holds for all natural numbers less than $n$.  Suppose that the vertices of the underlying copy of $P_n$ used to form $G$ (in order) are: $v_1, \ldots, v_n$.  Suppose also that $L$ is an arbitrary $k$-assignment for $G$, and $S= \{v_1, \ldots, v_k \}$.  Then, the inductive hypothesis tells us that an equitable, arborable $L'$-coloring of $G-S$ exists where $L'$ is the $k$-assignment for $G-S$ obtained by restricting the domain of $L$ to $V(G-S)$.  Note that for each $i \in [k]$,
$$|N_G(v_i) - S| = \max \{0, p - (k-i) \} \leq i \leq 2i-1.$$
Thus, an equitable, arborable $L$-coloring of $G$ exists by Lemma~\ref{lem: pelsmajerlike}.  It immediately follows that $G$ is equitably $k$-list arborable.
\end{proof}

We end this section by focusing on Conjecture~\ref{conj: stronger} for graphs with maximum degree at most~4.

\begin{thm} \label{thm: four}
Suppose $G$ is a graph with $\Delta(G) \leq 4$.  Then, the following statements hold. \\
(i)  If $G$ has at most 3 vertices of degree 4, then $G$ is equitably 2-list arborable.
\\
(ii)  If $G$ is connected, 4-regular, and $|V(G)| \in \{6,7,8,9,10,11,13,15 \}$, then $G$ is equitably 2-list arborable.
\end{thm}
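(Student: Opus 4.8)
Throughout $k=2$, so the cap on each color class is $\lceil |V(G)|/2\rceil$; a useful first remark is that two disjoint color classes cannot both exceed half the vertices, so in any $L$-coloring at most one color is used more than $\lceil |V(G)|/2\rceil$ times. I would begin by disposing of \emph{arborability}: under the hypotheses every component of $G$ is a path, a cycle, a complete graph $K_m$ with $m\le 4$, or a connected non-complete graph with $3\le\Delta\le 4$ (to which Theorem~\ref{thm: upperarbor} applies, since $k=2\ge\Delta(G)/2$), while the dense exceptions such as $K_5$ do not occur — in (i) because $K_5$ has five vertices of degree $4$, in (ii) because $|V(G)|\ne 5$ — so an arborable $L$-coloring always exists. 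The whole game is then to convert an arborable $L$-coloring into an equitable one, and the two main levers are the reduction Lemmas~\ref{lem: pelsmajerlike} and~\ref{lem: general} together with local recoloring.

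For part (i) I would induct on $|V(G)|$, handling $|V(G)|\le 4$ directly. If $G$ has a vertex $u$ with $d_G(u)\le 2$ (an isolated vertex is handled by a minor variant, pairing it with any remaining vertex of degree at most $3$), set $S=\{u,v\}$ with $v$ a neighbor of $u$ and apply Lemma~\ref{lem: pelsmajerlike} with $k=2$: one checks $|N_G(u)-S|\le 1=2(1)-1$ and $|N_G(v)-S|\le 3=2(2)-1$, and $G-S$ again has $\Delta\le 4$ with at most three vertices of degree $4$, so the inductive hypothesis provides the needed equitable, arborable coloring of $G-S$. If instead $\delta(G)\ge 3$ then $G$ is near-cubic (all but at most three vertices have degree exactly $3$); here I would take an arborable $L$-coloring $f$, and if it is not equitable let $c$ be the unique over-cap color and $F=G[f^{-1}(c)]$, a forest on more than $|V(G)|/2$ vertices. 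Recoloring a single $v\in f^{-1}(c)$ to its other list color never over-caps the recipient color (again by disjointness) and never breaks acyclicity of the $c$-class, so the sole obstruction is creating a cycle in the recipient class; and if \emph{every} $v\in f^{-1}(c)$ is obstructed, then each such $v$ has at least two neighbors colored with its alternate color, hence at most two colored $c$, forcing $F$ to be a disjoint union of paths. The hard part is to exclude this rigid configuration — by counting the edges leaving $f^{-1}(c)$ against $\Delta(G)\le 4$, or by a two-step Kempe-type swap that first breaks a forbidden cycle — after which a safe recoloring exists and one inducts on $|f^{-1}(c)|$. (Equivalently this step can be run through Lemma~\ref{lem: general} with $m=1$ and $S$ an edge $x_1x_2$: condition (ii) is automatic since $g$ then uses each color at most once, $D(x_1),D(x_2)\ne\emptyset$ because $x_1\sim x_2$ leaves each of them with at most three $f$-colored neighbors, and the only bad case is $D(x_1)=D(x_2)$ a common singleton, which pins down the lists around every edge.)

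For part (ii), induction within the $4$-regular class is unavailable (deleting vertices destroys regularity) and part (i) does not apply directly (a $4$-regular graph has far more than three vertices of degree $4$), so I would treat the listed orders by a finite analysis. For each $n\in\{6,7,8,9,10,11,13,15\}$ and each connected $4$-regular $G$ on $n$ vertices, I would choose a small set $S\subseteq V(G)$ (of size $2$ or $4$) with $G[S]$ having no isolated vertex and with $G-S$ having at most three vertices of degree $4$, so that $G-S$ is covered by part (i) (or is trivially small); the existence of such an $S$ in every connected $4$-regular graph of order at most $15$ is the combinatorial core of this part. Then, starting from the equitable, arborable coloring of $G-S$ supplied by (i), I would hand-check the hypotheses of Lemma~\ref{lem: general} on the small graph $G[S]$ — that an arborable $D$-coloring $g$ exists using no color more than $m$ times with at most one dangerous vertex per color — to extend to $G$. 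The smallest, densest orders (e.g.\ $n=6$, where $G=K_{2,2,2}$) are most transparent via Proposition~\ref{pro: obvious}, while the odd orders $13$ and $15$, where the cap $(n+1)/2$ leaves the least slack, need the most care.

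The main obstacle, in both parts, is the minimum-degree-$\ge 3$ regime, where the ``peel off a low-degree vertex'' reduction of Lemma~\ref{lem: pelsmajerlike} fails: for (i) one must rule out the rigid worst case (a path-forest $f^{-1}(c)$ with $|f^{-1}(c)|>|V(G)|/2$ in which every vertex is recoloring-obstructed, or an all-edges-bad list configuration), and for (ii) one must carry out the corresponding per-order verification that a suitable deletion set always exists — it is precisely the finiteness of that verification that produces the explicit list of orders in the statement.
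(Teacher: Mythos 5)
There is a genuine gap in both parts. In part~(i), for the regime $\delta(G)\ge 3$ you correctly reduce to the configuration where some color class $A=f^{-1}(c)$ has $|A|>\lceil n/2\rceil$ and every $u\in A$ is ``obstructed'' (has two neighbors $x,y$ outside $A$ with $f(x)=f(y)\in L(u)-\{c\}$ joined by a monochromatic path), but then you explicitly defer the exclusion of this configuration as ``the hard part,'' offering only a vague edge count or a Kempe swap. The edge count you gesture at does not close: each obstructed $u$ sends $2$ edges to $B=V(G)-A$ and each $z\in B$ that appears in a witnessing pair has a neighbor inside $B$, hence at most $3$ edges to $A$, which yields only $2|A|\le 3|B|$, i.e.\ $|A|\le 3n/5$ --- consistent with $|A|>\lceil n/2\rceil$ once $n\ge 10$. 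The missing idea (which is the actual content of the paper's proof) is a multiplicity argument: fix for each $u\in A$ one witnessing pair $F(u)=\{x,y\}\subseteq B$, let $m_z$ count how many $u$ have $z\in F(u)$, observe that $m_z\ge 4$ would force $d_G(z)\ge 5$ and that $m_z=3$ forces $d_G(z)=4$ (three neighbors in $A$ plus one in $B$). Since $\sum_{z\in B}m_z=2|A|$ and $|B|<|A|$, at least four vertices of $B$ must have $m_z=3$, i.e.\ degree $4$, and only now does the hypothesis ``at most $3$ vertices of degree $4$'' produce the contradiction. Without identifying where that hypothesis enters, the argument does not terminate. (Your $\delta\le 2$ reduction via Lemma~\ref{lem: pelsmajerlike} is fine, though the paper more simply deletes a single vertex of degree at most $3$, which always exists here, and runs the count above on the extension.)

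Part~(ii) is farther from a proof. Your plan --- find $S$ of size $2$ or $4$ with $G[S]$ edge-covered so that $G-S$ has at most $3$ vertices of degree $4$, apply part~(i), and extend by Lemma~\ref{lem: general} --- rests on two unestablished claims. First, in a $4$-regular graph a set $S$ with $|S|=2$ leaves at least $n-10$ vertices of degree $4$, so for $n\in\{14,15\}$ you would need $|S|=4$ with $N[S]$ covering all but three vertices of $G$; that such an $S$ exists in every connected $4$-regular graph of these orders is asserted, not proved, and is not obviously true. Second, even granting $S$, the hypotheses of Lemma~\ref{lem: general} on $G[S]$ (that the lists $D(v)$ admit an arborable coloring with at most one dangerous vertex per color class) are only promised a ``hand-check''; with $k=2$ the lists $D(v)$ can be singletons and can coincide, so this is exactly where such an argument typically fails. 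The paper instead takes an arborable $L$-coloring minimizing the size of the largest class, runs the same witnessing-pair count as in part~(i), and converts it via $4$-regularity into the edge inequality $|E(G[A])|\ge 4m-2n+\tfrac{1}{2}\lceil 2m/3\rceil\ge m$, contradicting acyclicity of $G[A]$; the arithmetic of that inequality is precisely what produces the list $\{6,7,8,9,10,11,13,15\}$. Nothing in your plan accounts for that list, which is a sign the intended mechanism has been missed.
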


\begin{proof}
Throughout our proof we let $|V(G)|=n$.  For Statement~(i), suppose that $G$ is a counterexample to the desired statement with fewest number of vertices.  Clearly, $n \geq 5$.  Suppose that $L$ is a 2-assignment for $G$ for which there is no equitable, arborable $L$-coloring of $G$.  There must be a vertex $v \in V(G)$ with $d_G(v) \leq 3$.  Let $G' = G- \{v\}$, and let $L'$ be the 2-assignment for $G'$ obtained by restricting the domain of $L$ to $V(G')$.  By the minimality of $G$, we know that there is an equitable, arborable $L'$-coloring of $G'$ which we will call $f'$.

Now, there must be some $a \in L(v)$ such that $|f'^{-1}(a) \cap N_G(v)| \leq 1$.  So, we let $f$ be the $L$-coloring of $G$ given by $f(v)=a$ and $f(u) = f'(u)$ when $u \in V(G')$.  Clearly, $f$ is an arborable $L$-coloring of $G$.  Since $f$ cannot be an equitable, arborable $L$-coloring of $G$ it must be that
$$\left \lceil \frac{n}{2} \right \rceil < |f^{-1}(a)| = |f'^{-1}(a)| + 1.$$
Since $|f'^{-1}(a)| \leq \lceil (n-1)/2 \rceil$, it must be the case that $n=2l$ for some $l \in \N$ and $|f^{-1}(a)|=l+1$.

We let $A = f^{-1}(a)$ and $B = V(G) - A$.  Note that $|B| = l-1$ and no color other than $a$ is used by $f$ more than $l-1$ times.  We also let $G_1$ be the forest $G[A]$ and $G_2 = G[B]$.  Notice that for each $u \in A$, $u$ is adjacent to two vertices $x, y \in B$ (in $G$) such that: $f(x) \in L(u) - \{a\}$, $f(x)=f(y)$, and there is a path in $G[f^{-1}(f(x))]$ connecting $x$ and $y$.  This is because if this was not the case, we could recolor $u$ with the element in $L(u)- \{a\}$ to obtain an equitable, arborable $L$-coloring of $G$.

Now, let $\mathcal{B}$ be the set of two element subsets of $B$.  Let $F: A \rightarrow \mathcal{B}$ be a function that maps each $u \in A$ to an $\{x,y\} \in \mathcal{B}$ so that $x,y \in N_G(u)$, $f(x) \in L(u) - \{a\}$, $f(x)=f(y)$, and there is a path in $G[f^{-1}(f(x))]$ connecting $x$ and $y$ (in the case multiple elements of $\mathcal{B}$ satisfy these conditions one of those elements is arbitrarily chosen for $F(u)$).  Now, for each $z \in B$, let $m_z = \sum_{P \in \mathcal{B}, z \in P} |F^{-1}(P)|$.  Then, note that
$$\sum_{z \in B} m_z = 2 \sum_{P \in \mathcal{B}} |F^{-1}(P)| = 2|A|= 2(l+1).$$
It is clear that for each $z \in B$ with $m_z \geq 1$, $d_{G_2}(z) \geq 1$.  We claim that for each $z \in B$, $m_z \leq 3$.  To see why this is so, suppose that there is a $t \in B$ with $m_t \geq 4$.  Note that $m_t \geq 4$ implies that $t$ is adjacent to at least 4 vertices in $A$.  This along with the fact that $d_{G_2}(t) \geq 1$ implies $d_G(t) \geq 5 > \Delta(G)$ which is a contradiction.

So, for each $z \in B$, $m_z \leq 3$.  Since $|B| = l-1$, there must be four distinct vertices $t_1,t_2,t_3,t_4 \in B$ such that $m_{t_i}=3$ for each $i \in [4]$.  This however implies that $d_G(t_i)=4$ for each $i \in [4]$ which contradicts the fact that $G$ has at most three vertices of degree~4.
\\
\\

We now turn our attention to Statement~(ii).  For the sake of contradiction, suppose that $G$ is not equitably 2-list arborable.  Let $L$ be a 2-assignment for $G$ for which there is no equitable, arborable $L$-coloring of $G$.  Since $G$ is not a complete graph, by Theorem~\ref{thm: upperarbor}, we know that there is an arborable $L$-coloring of $G$.  Since there is no equitable, arborable $L$-coloring of $G$, each arborable $L$-coloring of $G$ must have exactly one color class with size larger than $\lceil n/2 \rceil$.  Among all arborable $L$-colorings of $G$, choose one, $f$, so that the largest color class associated with $f$ is as small as possible.

Suppose $a$ is the color in $\bigcup_{v \in V(G)} L(v)$ for which $|f^{-1}(a)| > \lceil n/2 \rceil$.  We let $m=|f^{-1}(a)|$, $A = f^{-1}(a)$, and $B = V(G) - A$.  Note that $|B| = n-m$ and no color other than $a$ is used by $f$ more than $\lceil n/2 \rceil - 1$ times.  We also let $G_1$ be the forest $G[A]$ and $G_2 = G[B]$.  Notice that for each $u \in A$, $u$ is adjacent to two vertices $x, y \in B$ (in $G$) such that: $f(x) \in L(u) - \{a\}$, $f(x)=f(y)$, and there is a path in $G[f^{-1}(f(x))]$ connecting $x$ and $y$.  This is because if this was not the case, we could recolor $u$ with the element in $L(u)- \{a\}$ to obtain an arborable $L$-coloring of $G$ with every color class of size less than $m$.

Now, let $\mathcal{B}$ be the set of two element subsets of $B$.  Let $F: A \rightarrow \mathcal{B}$ be a function that maps each $u \in A$ to an $\{x,y\} \in \mathcal{B}$ so that $x,y \in N_G(u)$, $f(x) \in L(u) - \{a\}$, $f(x)=f(y)$, and there is a path in $G[f^{-1}(f(x))]$ connecting $x$ and $y$ (in the case multiple elements of $\mathcal{B}$ satisfy these conditions one of those elements is arbitrarily chosen for $F(u)$).  Now, for each $z \in B$, let $m_z = \sum_{P \in \mathcal{B}, z \in P} |F^{-1}(P)|$.  Then, note that
$$\sum_{z \in B} m_z = 2 \sum_{P \in \mathcal{B}} |F^{-1}(P)| = 2|A|= 2m.$$
Similar to the proof of Statement~(i), it is clear that for each $z \in B$ with $m_z \geq 1$, $d_{G_2}(z) \geq 1$, and for each $z \in B$, $m_z \leq 3$.

Now, let $T = \{z \in B : m_z \geq 1 \}$.  Since $\sum_{z \in B} m_z = 2m$ and each term in the sum $\sum_{z \in B} m_z$ is at most 3, we have that $|T| \geq \lceil 2m/3 \rceil$.  Now, notice that $|E(G)| = |E(G_1)|+|E(G_2)| + |E_G(A,B)|$.  Using this equation as our starting point, we see that:
\begin{align*}
|E(G_1)| = 2n - |E(G_2)| - |E_G(A,B)| &= 2n - \frac{\sum_{z \in B} d_{G_2}(z)}{2} - \sum_{z \in B} (4-d_{G_2}(z)) \\
&= 2n - 4|B| + \frac{\sum_{z \in B} d_{G_2}(z)}{2} \\
&= 2n - 4(n-m) + \frac{\sum_{z \in B} d_{G_2}(z)}{2} \\
& \geq 4m - 2n + \frac{\sum_{z \in T} d_{G_2}(z)}{2} \\
& \geq 4m - 2n + \frac{|T|}{2} \\
& \geq 4m - 2n + \frac{ \lceil 2m/3 \rceil}{2} .
\end{align*}
Now, we claim that $m \leq 4m - 2n + \frac{ \lceil 2m/3 \rceil}{2}$.  To why this is so, note that $m \geq \lceil n/2 \rceil + 1$.  Then, when $n \in \{6,8,10\}$, it is easy to see that $2n \leq 3(n/2 + 1) + \frac{ \lceil (n + 2)/3 \rceil}{2} \leq 3m + \frac{ \lceil 2m/3 \rceil}{2},$ and when $n \in \{7, 9, 11, 13, 15\}$, it is easy to see that $2n \leq 3(n+3)/2 + \frac{ \lceil (n + 3)/3 \rceil}{2} \leq 3m + \frac{ \lceil 2m/3 \rceil}{2}$.

So, we have that $|E(G_1)| \geq m$.  Since $G_1$ is a graph on $m$ vertices, we have that $G_1$ contains a cycle which implies that $f$ is not an arborable $L$-coloring of $G$.  This however is a contradiction.
\end{proof}

It is worth mentioning that in some sense Statement~(i) of Theorem~\ref{thm: four} is best possible since $K_5$ is not equitably 2-list arborable, but $K_5$ minus an edge is equitably 2-list arborable.  Also, in light of Conjecture~\ref{conj: stronger}, we suspect that all connected, 4-regular graphs with the exception of $K_5$ are equitably 2-list arborable.  So, we expect that Statement~(ii) can be improved quite a bit.

\section{A General Tool and its Applications} \label{general}

We will now prove a generalization of Lemma~\ref{lem: pelsmajerlike} which was used in the previous section and proven by Zhang in 2016.\footnote{Lemma~\ref{lem: pelsmajerlike} is similar in flavor to Lemma~3.1 in~\cite{KP03} which is a well-known Lemma that, along with its generalizations, has been used by many researchers to prove results about equitable choosability.}

First, we need some terminology. Suppose $G$ is a graph and $S \subseteq V(G)$.  Suppose $L$ is a $k$-assignment for $G$, and suppose $L'$ is the $k$-assignment for $G-S$ obtained by restricting the domain of $L$ to $V(G-S)$.  Suppose that $f$ is an equitable, arborable $L'$-coloring of $G-S$.  Now, for each $v \in S$, let
$$D(v) = L(v) - \{ c \in L(v) : |f^{-1}(c) \cap N_G(v)| \geq 2 \}.$$
Moreover, for each $v \in S$, let $D_d(v) = \{ c \in L(v) : |f^{-1}(c) \cap N_G(v)| = 1 \}$ and $D_s(v) = \{ c \in L(v) : |f^{-1}(c) \cap N_G(v)| = 0 \}$.  Clearly, $\{D_d(v), D_s(v) \}$ is a partition of $D(v)$.  If $c \in D_d(v)$ we say that color $c$ is \emph{dangerous with respect to $v$}, and if $c \in D_s(v)$ we say that color $c$ is \emph{safe with respect to $v$}.\footnote{From this point onward, we use these names for the lists whenever we are deleting a set $S$ of vertices from a graph $G$ in hopes of extending an equitable, arborable list coloring of $G-S$ to an equitable, arborable list coloring of $G$.}  Using this notation, the following observation is immediate.

\begin{obs} \label{obs: obvious}
If $v \in S$ and $|N_G(v) - S| = t$, then $|D(v)| \geq k - \lfloor t/2 \rfloor$ and $|D_s(v)| \geq k - t$.
\end{obs}

We are now ready to prove Lemma~\ref{lem: general}, which we restate.

\begin{customlem} {\bf \ref{lem: general}}
Suppose $m \in \N$ and $S= \{x_1, \ldots, x_{mk} \}$ where $x_1, \ldots, x_{mk}$ are distinct vertices of $G$.  Suppose that $L$ is a $k$-assignment for $G$, and $L'$ is the $k$-assignment for $G-S$ obtained by restricting the domain of $L$ to $V(G-S)$. Let $f$ be an equitable, arborable $L'$-coloring of $G-S$.

Suppose that there is an arborable $D$-coloring $g$ of $G[S]$ such that: $g$ uses no color more than $m$ times and for each $c \in g(S)$ there is at most one vertex $v \in g^{-1}(c)$ with the property that $c$ is dangerous with respect to $v$.  Then, the function $h : V(G) \rightarrow \bigcup_{v \in V(G)} L(v)$ given by
\[ h(v) = \begin{cases}
f(v) & \text{if $v \notin S$} \\
g(v) & \text{if $v \in S$}
\end{cases}
\]
is an equitable, arborable $L$-coloring of $G$.	
\end{customlem}

\begin{proof}
Clearly, $h$ is an $L$-coloring of $G$.  We will first show that $h$ is an arborable $L$-coloring of $G$; that is, we will show that for each color $c$ in the range of $h$, $G[h^{-1}(c)]$ is acyclic.  Note that $h^{-1}(c) = f^{-1}(c) \cup g^{-1}(c)$.  We know that $G[f^{-1}(c)]$ and $G[g^{-1}(c)]$ are vertex disjoint, and each of these graphs is acyclic.  Since there is at most one vertex $v \in g^{-1}(c)$ with the property that $c$ is dangerous with respect to $v$, we know that there is at most one vertex in $g^{-1}(c)$ that is adjacent in $G$ to a vertx in $f^{-1}(c)$.  Moreover, such a vertex can only be adjacent to exactly one vertex in $f^{-1}(c)$.  Thus, $G[h^{-1}(c)]$ is either $G[f^{-1}(c)]+G[g^{-1}(c)]$ or $G[f^{-1}(c)]+G[g^{-1}(c)]$ with a single edge added between one vertex of $G[f^{-1}(c)]$ and one vertex of $G[g^{-1}(c)]$.  Since adding a single edge between two vertex disjoint forests cannot create a cycle, it follows that $G[h^{-1}(c)]$ is acyclic.

Finally, to see that $h$ is an equitable, arborable $L$-coloring of $G$, notice that since $f$ uses no color more than $\lceil |V(G-S)|/k \rceil = \lceil |V(G)|/k \rceil - m$ times and $g$ uses no color more than $m$ times, $h$ uses no color more than $\lceil |V(G)|/k \rceil$ times.
\end{proof}

We now use Lemma~\ref{lem: general} to improve upon Theorem~\ref{pro: pathpower} when $p \geq 3$.

\begin{custompro} {\bf \ref{pro: p-1}}
Suppose $n,p \in \N$, $p \geq 3$, and $G=P_n^p$.  Then, $G$ is equitably $(p-1)$-list arborable.
\end{custompro}

\begin{proof}
Suppose that $p$ is a fixed natural number satisfying $p \geq 3$.  We will prove the desired result by induction on $n$.  The result is obvious when $n \leq 2p-2$.  So, suppose that $n \geq 2p-1$ and that the desired statement holds true for all natural numbers less than $n$.

Suppose that the vertices of the underlying copy of $P_n$ used to form $G$ (in order) are: $v_1, \ldots, v_n$.  Suppose also that $L$ is an arbitrary $(p-1)$-assignment for $G$, and $S= \{v_1, \ldots, v_{2p-2} \}$.  Then, the inductive hypothesis tells us that an equitable, arborable $L'$-coloring $f$ of $G-S$ exists where $L'$ is the $k$-assignment for $G-S$ obtained by restricting the domain of $L$ to $V(G-S)$.  In order to show that there is an equitable, arborable $L$-coloring of $G$, we will construct an arborable $D$-coloring of $G[S]$ that satisfies the hypotheses of Lemma~\ref{lem: general}.

We begin with several observations.  First, for each $i \in [p-2]$, $D_s(v_i)=D(v_i)=L(v_i)$ which implies $|D_s(v_i)|=p-1$.  Second, since $v_{p-1}$ has one neighbor in $G-S$, we know that $|D(v_{p-1})|=p-1$ and $|D_s(v_{p-1})| \geq p-2$.  Also, for each $i \in \{p+1, \ldots, 2p-2 \}$, since $v_i$ has at most $i-p+2$ neighbors in $G-S$,
$$|D(v_i)| \geq (p-1) - \left \lfloor \frac{i-p+2}{2} \right \rfloor.$$
From this inequality it is easy to verify that $|D(v_i)| \geq 2p-1-i$ for each $i \in \{p+1, \ldots, 2p-2 \}$.  Finally, one of the following statements must be true since $v_p$ has at most two neighbors in $G-S$: (1) $|D(v_p)|=p-1$ or (2) $|D(v_p)|=p-2$.  We will construct an arborable $D$-coloring of $G[S]$ that satisfies the hypotheses of Lemma~\ref{lem: general} in each of these cases.

In case (1) begin by greedily coloring the vertices $v_{2p-2}, \ldots, v_{p}$  with $p-1$ pairwise distinct colors from $D(v_{2p-2}), \ldots, D(v_p)$ respectively (this is possible since in case (1) we have that $|D(v_i)| \geq 2p-1-i$ for each $i \in \{p, \ldots, 2p-2 \}$).  Note that $v_i$ may be colored with a dangerous color with respect to $v_i$ for each $i \in \{p, \ldots, 2p-2 \}$.  Then, greedily color $v_{p-1}, \ldots, v_1$ with $p-1$ pairwise distinct colors from $D_s(v_{p-1}), \ldots, D_s(v_1)$ respectively.  Call the resulting coloring of $G[S]$, $g$.  It is easy to see that $g$ uses no color more than two times and is therefore an arborable $D$-coloring of $G[S]$.  Furthermore, by construction, for each color class $C$ of $g$ there is at most one vertex $v \in C$ that is colored with a color in $D_d(v)$.  Lemma~\ref{lem: general} immediately implies that there is an equitable, arborable $L$-coloring of $G$.

In case (2) begin by greedily coloring the vertices $v_{2p-2}, \ldots, v_{p+1}, v_{p-1}$  with $p-1$ pairwise distinct colors from $D(v_{2p-2}), \ldots, D(v_{p+1}), D(v_{p-1})$ respectively (this is possible since we have that $|D(v_{p-1})|=p-1$ and $|D(v_i)| \geq 2p-1-i$ for each $i \in \{p+1, \ldots, 2p-2 \}$).  Note that $v_i$ may be colored with a dangerous color with respect to $v_i$ for each $i \in \{p-1, p+1, \ldots, 2p-2 \}$.  Now, notice that in case (2) we must have that $D(v_p)=D_s(v_p)$.  So, we greedily color $v_{p}, v_{p-2}, \ldots, v_1$ with $p-1$ pairwise distinct colors from $D_s(v_p), D_s(v_{p-2}), \ldots, D_s(v_1)$ respectively (this is possible since $|D_s(v_p)| \geq p-2 \geq 1$).  Call the resulting coloring of $G[S]$, $g$.  It is easy to see that $g$ uses no color more than two times and is therefore an arborable $D$-coloring of $G[S]$.  Furthermore, by construction, for each color class $C$ of $g$ there is at most one vertex $v \in C$ that is colored with a color in $D_d(v)$.  Lemma~\ref{lem: general} immediately implies that there is an equitable, arborable $L$-coloring of $G$.
\end{proof}

Finally, we use Lemma~\ref{lem: general} to prove Conjecture~\ref{conj: HSanalogue} for powers of cycles.

\begin{customthm} {\bf \ref{pro: cyclepower}}
Suppose that $p \geq 2$ and $n \geq 2p+2$.  If $G = C_{n}^p$, then $G$ is equitably $k$-list arborable for each $k$ satisfying $k \geq p+1$.
\end{customthm}

\begin{proof}
Suppose $p,k$ are fixed natural numbers satisfying $p \geq 2$ and $k \geq p+1$.  We will prove the result by induction on $n$.  Notice that when $n$ satisfies $2p+2 \leq n \leq 2k$ the desired result is obvious.

So, we may assume that $n > 2k$. Suppose that the vertices of the underlying copy of $C_n$ used to form $G$ in cyclic order are: $v_1, \ldots, v_n$.  Suppose that $L$ is an arbitrary $k$-assignment for $G$, and let $S = \{v_1, \ldots, v_{2k} \}$.  Notice that $G-S$ is a copy of $P_{n-2k}^p$.  So, Theorem~\ref{pro: pathpower} tells us there is an equitable, arborable $L'$-coloring $f$ of $G-S$ where $L'$ is the $k$-assignment for $G-S$ obtained by restricting the domain of $L$ to $V(G-S)$.  In order to show that there is an equitable, arborable $L$-coloring of $G$, we will construct an arborable $D$-coloring of $G[S]$ that satisfies the hypotheses of Lemma~\ref{lem: general}.

It is easy to see that for each $i \in [k]$, $|N_G(v_i) - S| \leq \max\{p+1-i, 0 \}$ and $|N_G(v_{k+i}) - S| \leq \max \{p+i-k,0 \}$.  This implies that for each $i \in [k]$, $|D_s(v_i)| \geq k - \max\{p+1-i, 0 \} \geq i$.  So, we can greedily color $v_{1}, \ldots, v_k$ with $k$ pairwise distinct colors from $D_s(v_1), \ldots, D_s(v_k)$ respectively.  Similarly, for each $i \in [k]$, $|D_s(v_{k+i})| \geq k - \max\{p+i-k, 0 \} \geq k-i+1$.  Consequently, we can greedily color $v_{2k}, \ldots, v_{k+1}$ with $k$ pairwise distinct colors from $D_s(v_{2k}), \ldots, D_s(v_{k+1})$ respectively.  Call the resulting coloring of $G[S]$, $g$.

It is easy to see that $g$ uses no color more than two times and is therefore an arborable $D$-coloring of $G[S]$.  Furthermore, by construction, for each $v \in S$, $g(v)$ is safe with respect to $v$.  Lemma~\ref{lem: general} immediately implies that there is an equitable, arborable $L$-coloring of $G$.
\end{proof}

{\bf Acknowledgment.}  The authors would like to thank the anonymous referees for their helpful comments that improved the readability of this paper.

\end{document}